\newtheorem{thm}{Theorem}[section]
\newtheorem*{theorem*}{Theorem}
\newtheorem{lem}[thm]{Lemma}
\newtheorem{prop}[thm]{Proposition}
\newtheorem{cor}[thm]{Corollary}
\theoremstyle{definition}
\newtheorem{rem}[thm]{Remark}
\numberwithin{equation}{section}
\theoremstyle{remark}
\begin{document}

\title{\textbf{Perfectly packing a square by squares of sidelength $f(n)^{-t}$}}

\author{Keiju Sono}

\date{}
\allowdisplaybreaks

\maketitle 
\noindent
\begin{abstract}
In this paper, we prove that  for any $1/2<t<1$, there exists a positive integer $N_{0}$ depending on $t$ such that for any $n_{0}\geq N_{0}$,  squares of sidelength $f(n)^{-t}$ for $n\geq n_{0}$ can be  packed with disjoint interiors into  a square of area $\sum _{n=n_{0}}^{\infty}f(n)^{-2t}$,  if the function $f$ satisfies some suitable conditions. The main theorem (Theorem 1.1) is a generalization of Tao's theorem in \cite{T}, which argued the case $f(n)=n$.  As corollaries, we prove that there are such packings of squares when $f(n)$ represents  the $n$th element of either an arithmetic progression or the set of prime numbers.  In these cases, we give effective lower bounds for $N_{0}$ with respect to $t$. Furthermore, we consider the case that $f(n)$ represents the $n$th element of the set of twin primes and prove that squares of sidelength $f(n)^{-t}$ for $n\geq n_{0}$ can be packed with disjoint interiors into a  slightly larger square than theoretically expected. 

\footnote[0]{2020 {\it Mathematics Subject Classification}.  52C15 }
\footnote[0]{{\it Key Words and Phrases}. square packing in a square, perfectly packing,  Meir-Moser conjecture}
\end{abstract}


\section{Introduction}
Let $\Omega $ be a region in $\mathbb{R}^{n}$. It is said that a finite or countably infinite number of squares in $\mathbb{R}^{n}$ are {\it packed} into $\Omega$ if they are included in $\Omega$ with disjoint interiors. If the region $\Omega$ is covered by  these squares up to null subsets, then we say that $\Omega$ is {\it packed perfectly} by these squares. We also call this situation a {\it perfectly packing} of $\Omega$ by squares.  

Meir and Moser \cite{MM} presented the problem of whether rectangles of dimension $\frac{1}{n}\times \frac{1}{n+1}$ for $n\geq 1$ can be packed perfectly into a square of area $1$. They also posed the question whether squares of sidelength $n^{-1}$ for $n\geq 2$ can be packed perfectly into a square of area $\frac{\pi ^{2}}{6} -1$. Currently these questions are still unsolved. One of the best current results on these problems is the work of Paulhus \cite{P}, who obtained an algorithm for packing  squares of sidelength  $\frac{1}{n}\times \frac{1}{n+1}$ for $n\geq 1$ with disjoint interiors into a square of area $1+\frac{1}{10^{9}+1}$, and that for packing squares of sidelength $n^{-1}$ for $n\geq 2$ with disjoint interiors into a square of area $\frac{\pi ^{2}}{6}-1 +\frac{1}{1244918662}$.  There were some incorrect lemmas in Paulhus' paper. However, Grzegorek and Januszewski \cite{GJ} modified Paulhus' argument and gave completely correct proofs.

Another approach for the second question of Meir and Moser is to consider whether squares of sidelength $n^{-t}$ for $n\geq 2$ can be packed with disjoint interiors into a square of area $\zeta (2t)-1$ for $t>1/2$. The goal is to prove that there exists such a packing for $t=1$. Januszewski and Zielonka \cite{JZ} proved that there is a proper packing for $1/2<t\leq 2/3$ by combining ideas in \cite{C}, \cite{J}, \cite{W} and Paulhus' algorithm \cite{P} for packing squares. They also considered the 3-dimensional case, i.e.,  the problem of perfectly packing  a cube by cubes. Furthermore, there are several papers considering perfect packing of the $d$-dimensional cubes of sidelength $1^{-t}$, $2^{-t}$, $3^{-t}, \ldots$ for special values of $t$ (see \cite{JZ2} and \cite{J2}, for example).

Recently Tao \cite{T} conditionally improved their work. He proved that for any $1/2<t<1$, there exists a large positive integer $N_{0}$ depending on $t$ such that for any integer $n_{0}\geq N_{0}$,  squares of sidelength $n^{-t}$ for $n\geq n_{0}$ can be packed perfectly into a square of area $\sum _{n=n_{0}}^{\infty}n^{-2t}$. His new idea is to arrange the squares in near-lattice formations, which enables us to take $t$ arbitrarily close to $1$. He also remarked that his method will also be applicable to the problem of packing squares of dimension $\frac{1}{n^{t}}\times \frac{1}{(n+1)^{t}}$ into a square of area $\sum _{n=n_{0}}^{\infty}\frac{1}{n^{t}(n+1)^{t}}$.  More recently, McClenagan \cite{Mc} obtained the 3-dimensional analogue of Tao's result. His theorem is also an improvement of the result of Januszewski and Zielonka \cite{JZ} mentioned above.

In this paper, for $q> r \geq 0$, we call the sequence $\{(qn+r)^{-1} \}$ an {\it AP-harmonic} sequence, and call the sequence of reciprocals of prime numbers the {\it P-harmonic} sequence. Furthermore, we call the sequence of reciprocals of twin primes the {\it TP-harmonic} sequence. 

We use a non-standard notation to estimate the size of functions effectively. When two functions $F$, $G$ satisfy $|F(x)|\leq G(x)$ over some specified range, we denote this relation by $F(x)=O_{1}(G(x))$.

The purpose of this paper is to generalize Tao's theorem to the problem of packing of squares of sidelength $f(n)^{-t}$ for $1/2<t<1$ and give some effective lower bounds for the size of $N_{0}$ depending on $t$ in some specific cases. The result includes (an effective version of) Tao's theorem. We will prove the  main theorem below in Section 2. In Section 3,   as a corollary of Theorem 1.1,  we will prove a result on perfectly packing  a square by squares of nearly AP-harmonic sidelength. In Section 4, we will prove a similar result in the case of packing of squares of nearly P-harmonic sidelength. Finally, in Section 5, we will give an ineffective version of Theorem 1.1 and  give a packing of squares of nearly TP-harmonic sidelength into a slightly larger square than the expected size.

\begin{thm}
Let $1/2<t<1$ and put $\delta :=1-t$. Suppose that there exist positive constants $c_{i}, d_{i} (i=1,2), c_{\lambda ,\nu}, l, K$,  functions $\nu _{i}, \lambda _{i}: \mathbb{N}\to \mathbb{R}_{>0}$ $(i=1,2)$, positive functions $\xi _{i}$, $\eta _{i}$ on the open interval $(1/2, 1)$ $(i=1,2)$, positive integers $M$, $N_{0}$, $N_{1}$ $(N_{1}<N_{0})$ and a smooth increasing function $f: \mathbb{R}_{\geq 1} \to \mathbb{R}_{>0}$ such that for any positive integers $n_{0}\geq N_{0}$, $n_{1}\geq N_{1}$, the following ten conditions hold.

\begin{equation}
\label{16}
f(n_{0})\geq K f((1+l)n_{0}), \quad K\geq (2/3)^{\frac{1}{2}},
\end{equation}

\begin{equation}
\label{20}
c_{1}\frac{\xi _{1}(t)\nu _{1}(n_{1})}{f(n_{1})^{t+\delta t}} \leq \sum _{n=1}^{n_{1}-1}\frac{1}{f(n)^{t+\delta t}} \leq c_{2}\frac{\xi _{2}(t)\nu _{2}(n_{1})}{f(n_{1})^{t+\delta t}},
\end{equation}

\begin{equation}
\label{21}
d_{1}\frac{\eta _{1}(t)\lambda _{1}(n_{1})}{f(n_{1})^{2t}} \leq \sum _{n=n_{1}}^{\infty}\frac{1}{f(n)^{2t}} \leq d_{2}\frac{\eta _{2}(t)\lambda _{2}(n_{1})}{f(n_{1})^{2t}} ,
\end{equation}

\begin{equation}
\label{25}
\lambda _{1}(n_{0})\geq c_{\lambda ,\nu}\nu _{2}(n_{0}),
\end{equation}

\begin{equation}
\label{34}
\lambda _{2}(n_{0})\leq d_{2}^{-1}\eta _{2}(t)^{-1}f(n_{0})^{2t},
\end{equation}

\begin{equation}
\label{M}
M\geq \max \left\{ 4^{\frac{1}{1-t}} \left( \frac{c_{2}\xi _{2}(t)}{c_{\lambda ,\nu}d_{1}\eta _{1}(t)} \right)^{\frac{2}{t(1-t)}}, \left( \frac{50}{K^{t(2-t)}} \right)^{\frac{2}{1-t}}   \right\},
\end{equation}

\begin{equation}
\label{15}
\underset{x\in [n_{0}, n_{0}+9M^{2}]}{\max}f^{'}(x)\leq \frac{1}{4752}M^{-4}f(n_{0}),
\end{equation}

\begin{equation}
\label{15.5}
f(n_{0}+9M^{2})\leq (264M^{2})^{\frac{1}{t}}f(n_{0}),
\end{equation}

\begin{equation}
\label{31}
9Mf(n_{0})^{t}\leq ln_{0},
\end{equation}

\begin{equation}
\label{33}
\nu _{1}(n_{0})\lambda _{2}(n_{0})^{-\frac{1+\delta}{2}}\geq \frac{(d_{2}\eta _{2}(t))^{\frac{1+\delta}{2}}}{c_{1}\xi _{1}(t)}M^{1-\frac{\delta}{2}}.
\end{equation}

Then, for any $n_{0}\geq N_{0}$,  squares of sidelength $f(n)^{-t}$ for $n \geq n_{0}$ can be packed perfectly into a square of area $\sum _{n=n_{0}}^{\infty}f(n)^{-2t}$. 
\end{thm}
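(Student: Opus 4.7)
The plan is to follow Tao's iterative, block-by-block strategy from \cite{T}, generalizing his near-lattice arrangement from $f(n)=n$ to the abstract weight $f$. Let $S_{n_{0}}$ denote a square of area $\sum_{n\geq n_{0}}f(n)^{-2t}$. The first step is to identify a cutoff $n_{1}>n_{0}$ such that the squares indexed by $[n_{0},n_{1})$ have combined area equal to that of a thin horizontal strip $T_{0}$ along the top of $S_{n_{0}}$ of height $f(n_{0})^{-t}$, the largest sidelength in the block. The two-sided sum estimates \eqref{20} and \eqref{21} are precisely what permits such a choice of $n_{1}$ with quantitative control on the ratio $n_{1}/n_{0}$, and together they determine how the $\nu_{i}$ and $\lambda_{i}$ ratios propagate between the head and the tail.

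Within $T_{0}$ I would employ Tao's lattice packing: subdivide $T_{0}$ vertically into roughly $M$ equal-height sub-strips and lay the squares $f(n)^{-t}$ into the sub-strips in the cyclic order $n_{0}, n_{0}+1,\ldots$, so that consecutive squares in the same sub-strip are almost the same size. Condition \eqref{15} ensures that $f'$ is tiny on $[n_{0}, n_{0}+9M^{2}]$, so the total width of each sub-strip matches the width of $T_{0}$ up to an error that is of lower order than a single sidelength; these tiny residual gaps are then plugged by a short corrector block of subsequent indices, exactly as in the lattice-correction step of \cite{T}. Condition \eqref{15.5} bounds $f(n_{0}+9M^{2})/f(n_{0})$ so that the entire block stays in the near-constant regime, while \eqref{31} places $n_{0}+9M^{2}$ safely below the scale $(1+l)n_{0}$ at which \eqref{16} provides the definite geometric decrease of $f$ needed to restart the induction.

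Removing $T_{0}$ leaves a rectangle $S_{n_{1}}$ whose area is exactly $\sum_{n\geq n_{1}}f(n)^{-2t}$. The heart of the argument is the inductive step: one must show that $S_{n_{1}}$ (after any needed minor reshaping) again satisfies all ten hypotheses with $n_{1}$ in place of $n_{0}$, so that the construction iterates indefinitely. Here \eqref{16}, \eqref{25}, and \eqref{34} are invoked: \eqref{16} supplies geometric decay of $f$ between consecutive scales, \eqref{25} ties the tail lower bound $\lambda_{1}(n_{0})$ to the head upper bound $\nu_{2}(n_{0})$ so that the leftover is thick enough to absorb the next block, and \eqref{34} enforces the near-square aspect ratio of $S_{n_{1}}$. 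The threshold \eqref{M} on $M$ is the minimal value at which the lattice discrepancy is self-sustainingly small, and \eqref{33} is the corresponding quantitative consistency check coupling $M$, $\nu_{1}$, $\lambda_{2}$, and $\delta$. Taking the union over all iterations produces a packing whose uncovered area vanishes, yielding perfect packing.

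The main obstacle, as in \cite{T}, will be the simultaneous matching of geometry and arithmetic at each iteration. Because the construction is explicit, equality of areas is automatic once $n_{1}$ is chosen, but the leftover rectangle's aspect ratio depends on $f$ at the cutoff in a way that must be shown to remain compatible with reapplying the construction; this is the delicate interplay encoded in \eqref{M}, \eqref{15}, and \eqref{33}. The effective constants in the theorem arise precisely from tracking, at each iteration, how the ten hypotheses re-emerge quantitatively from themselves, which is the most technical part of the argument. Once this self-similar closure is verified, the rest of the proof is bookkeeping.
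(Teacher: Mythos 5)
Your outline misses the central mechanism of the paper's argument, and the step it replaces it with does not work. The paper does not peel off a strip whose area exactly equals that of a block of squares: at any finite stage the squares $f(n)^{-t}$, $n_{0}\leq n<n_{0}^{'}$, cannot tile a rectangle exactly, and the thin gaps left by the near-lattice arrangement cannot be ``plugged by a short corrector block of subsequent indices'' --- no such correction step exists in \cite{T}; the gaps are genuinely left unfilled and must be recycled. Consequently your claim that removing $T_{0}$ leaves a region of area exactly $\sum_{n\geq n_{1}}f(n)^{-2t}$ fails, and your iteration does not close. What the paper (following Tao) actually does is maintain a finite family ${\cal R}$ of leftover rectangles subject to three invariants: the exact area identity (\ref{17}), the \emph{weighted} perimeter bound $\mathrm{perim}_{\delta}({\cal R})=\sum_{R}w(R)^{\delta}h(R)\leq M^{-1+\frac{\delta}{2}}\sum_{n=1}^{n_{0}-1}f(n)^{-t-\delta t}$, and heights at most $1$. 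A pigeonhole argument comparing the area (bounded below via (\ref{21})) with the weighted perimeter (bounded above via (\ref{20})), together with (\ref{25}) and (\ref{M}), produces a rectangle of width at least $2Mf(n_{0})^{-t}$; this is sliced into bounded-eccentricity pieces, each packed in near-lattice formation by a block of consecutive squares (Proposition 2.1, where (\ref{15}) and (\ref{15.5}) enter, with the gap rectangles having unweighted perimeter $O(Mf(n_{0})^{-t})$ and width $O(f(n_{0})^{-t})$), and the resulting gaps are added back into the family; conditions (\ref{M}), (\ref{16}) and (\ref{31}) then show the weighted-perimeter invariant is restored with $n_{0}$ replaced by the new starting index. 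This downward induction packs all squares up to an arbitrary $n_{\max}$, and the perfect packing of all $n\geq n_{0}$ is obtained only in the limit, via a compactness argument, not by making the waste vanish at each finite step. Without the weighted-perimeter bookkeeping and the pigeonhole selection, the quantitative hypotheses have no place to act, so this is a genuine gap rather than an alternative route.

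Several of the roles you assign to the hypotheses are also inaccurate: (\ref{34}) serves only to guarantee that the initial square has sidelength (hence height) at most $1$, i.e.\ hypothesis (\ref{19}) of the induction; (\ref{33}) serves only to verify that the initial square satisfies the weighted-perimeter hypothesis (\ref{18}); and (\ref{16}) combined with (\ref{31}) keeps every index consumed during one block step inside $[n_{0},(1+l)n_{0}]$, so that sidelengths shrink by at most the factor $K$ and the eccentricity condition $Mf(n_{i-1}^{'})^{-t}\leq h(R_{i})\leq 3Mf(n_{i-1}^{'})^{-t}$ needed to reapply the block-packing proposition remains valid --- it is not about geometric decay between successive strips of a fixed decomposition.
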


\section{Proof of the main theorem}
Given a rectangle $R$, we define the {\it width}  $w(R)$ of $R$ to be the smaller of two sidelengths of $R$, and define the ${\it height}$ $h(R)$ of $R$ to be the larger one. If $R$ is a square, then $w(R)=h(R)$. The area  of $R$ is equal to $w(R)h(R)$. Given a family of rectangles ${\cal R}$, we define the {\it total area} of ${\cal R}$ by
\[
\mathrm{area}({\cal R})=\sum _{R\in {\cal R}}w(R)h(R),
\]
and define its {\it unweighted total perimeter}  by 
\[
\mathrm{perim}({\cal R})=2\sum _{R\in {\cal R}}(w(R)+h(R)).
\]

\subsection{Efficiently packing a small rectangle of bounded eccentricity}
\begin{prop}
Let $1/2<t<1$ and $M$ be a positive integer. Suppose that a smooth increasing function   $f: \mathbb{R}_{\geq 1}\to \mathbb{R}_{>0}$ and any positive integer $n_{0}\geq N_{0}$ satisfy (\ref{15}) and (\ref{15.5}). Let $R$ be a rectangle with 
\begin{equation}
\label{12}
Mf(n_{0})^{-t}\leq w(R) \leq h(R) \leq 3Mf(n_{0})^{-t}.
\end{equation}
Then, there exists a positive integer $n_{0}^{'}$ satisfying $M^{2}\leq n_{0}^{'}-n_{0}\leq 9M^{2}$ such that $R$ is perfectly packed by squares of sidelength $f(n)^{-t}$ $(n_{0}\leq n <n_{0}^{'})$ and a family of rectangles ${\cal R}$ with disjoint interiors whose total perimeter satisfies
\begin{equation}
\label{15.75}
\mathrm{perim}({\cal R})\leq 25Mf(n_{0})^{-t}
\end{equation}
and the width of each rectangle is at most $2f(n_{0})^{-t}$. 
\end{prop}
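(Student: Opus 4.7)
The proof will follow Tao's near-lattice strategy. The crucial preliminary is that (\ref{15}) plus the mean value theorem forces $f(n)^{-t}$ to be nearly constant on $[n_{0},n_{0}+9M^{2}]$: writing $s:=f(n_{0})^{-t}$, one gets $f(n)-f(n_{0})\leq 9M^{2}\cdot (4752)^{-1}M^{-4}f(n_{0})=f(n_{0})/(528M^{2})$, hence $f(n)^{-t}/s\in[1-O(M^{-2}),1]$ for every $n$ in that range. The cruder bound (\ref{15.5}) will serve as a safety estimate for the smallest sidelength that appears.

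I would orient $R$ with its longer side of length $h(R)$ horizontal, and pack $R$ in horizontal rows from the bottom up. Row $i$ begins with some square index $n_{i}$ (with $n_{1}=n_{0}$), has height equal to $f(n_{i})^{-t}$ (the sidelength of its leftmost, largest square), and is filled left-to-right with consecutive squares $n_{i},n_{i}+1,\ldots$ whose bottoms sit on the row floor, stopping at the first index $n_{i+1}$ for which adding the next square would overflow $h(R)$ horizontally. Set $n_{0}':=n_{r+1}$, where $r$ is the number of completed rows. Since $h(R),w(R)\in[Ms,3Ms]$ and each sidelength lies in $(1\pm O(M^{-2}))s$, each row contains $k_{i}\in[M,3M]$ squares and one fits $r\in[M,3M]$ rows, giving $n_{0}'-n_{0}=\sum_{i}k_{i}\in[M^{2},9M^{2}]$ as required.

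The leftover family $\mathcal{R}$ naturally splits into: (i) the right-margin strip of each row (width $<s$, height $\leq s$); (ii) the top-margin strip above the last row (width $<s$, length $\leq h(R)$); and (iii) interior slivers above every non-leftmost square in each row, each of width at most $s$ and height at most the within-row sidelength variation, bounded by $s/(528M^{2})$ via (\ref{15}). The width bound $\leq 2f(n_{0})^{-t}$ in all three cases is then automatic.

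The main obstacle is the perimeter bound $\mathrm{perim}(\mathcal{R})\leq 25Mf(n_{0})^{-t}$: a naive count treating each interior sliver as one rectangle gives perimeter $\asymp 2s$ per sliver times $\asymp M^{2}$ slivers, namely $\Omega(M^{2}s)$, which is far too large. The resolution exploits the monotonic decrease of sidelengths within each row: the staircase above the squares in a row admits a decomposition into horizontal strips whose lengths telescope, and by carefully choosing whether to split vertically or horizontally (and by merging the rightmost sliver into the row's right margin), the total perimeter contribution of row $i$ can be bounded by the row-width $\leq 3Ms$ plus lower-order corrections controlled by $s/(528M^{2})$. Summed over $r\leq 3M$ rows together with the top-margin contribution, the effective bounds from (\ref{15}) and (\ref{15.5}) should then give $\mathrm{perim}(\mathcal{R})\leq 25Ms$. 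Carrying out this grouping argument cleanly, and verifying that each constituent rectangle still has width at most $2f(n_{0})^{-t}$ after the merge, is the main technical content of the proof.
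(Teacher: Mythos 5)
There is a genuine gap, and it is fatal to the arrangement you chose rather than a detail to be tightened. In your row-based packing every row has a common floor, so above each non-leftmost square of a row there is a gap whose \emph{horizontal} extent is a full square width $\approx f(n_{0})^{-t}$ (only its height is tiny), and since $f$ is strictly increasing these gaps have strictly positive height. There are $\Theta(M^{2})$ such gaps, and no decomposition or merging can rescue the perimeter bound: any family of rectangles with disjoint interiors covering the staircase region of one row must have total horizontal sidelength at least the length of its $x$-projection, which is $\gtrsim (k_{i}-1)\cdot 0.99\, f(n_{0})^{-t}$ with $k_{i}\geq M$ squares per row; summing over the $\geq M$ rows gives $\mathrm{perim}({\cal R})\gtrsim 2(M-1)^{2}f(n_{0})^{-t}$, which dwarfs $25Mf(n_{0})^{-t}$ because $M$ is enormous by (\ref{M}). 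Your "telescoping" hope is where the argument breaks: in a horizontal-strip decomposition of the staircase only the \emph{heights} telescope, while the strip \emph{lengths} add up to something quadratic in the number of squares per row, so the obstacle you correctly identified is not resolved.

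The paper avoids this by not using row floors at all. Following Tao, the squares are placed in a near-lattice formation (\ref{4})--(\ref{6}): within each row consecutive squares are in exact horizontal contact ($x_{i+1,j}=x_{i,j}+f(n_{i,j})^{-t}$) and within each column consecutive squares are in exact vertical contact ($y_{i,j+1}=y_{i,j}+f(n_{i,j})^{-t}$), the rows being anchored at the right edge and the columns at the bottom. Then the only interior leftovers are the at most $9M^{2}$ rectangles (\ref{10}) surrounded by quadruples of squares, and \emph{both} of their dimensions are of size $O(M^{3}{\cal D}_{n_{0},M}f(n_{0})^{-1-t})$, so by (\ref{15}) their total perimeter is at most about $\tfrac{1}{2}Mf(n_{0})^{-t}$; the remaining leftovers are only $O(M)$ margin rectangles along the left and top edges, each of perimeter about $4f(n_{0})^{-t}$, and adding these gives (\ref{15.75}). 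If you want to salvage your write-up, you must replace the common-row-floor placement by this doubly-contiguous lattice placement (and then prove, as in Lemma 2.3 via (\ref{15}) and (\ref{15.5}), that the four surrounding squares really do enclose a rectangle); with row floors the perimeter bound is simply unattainable.
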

\noindent
{\it Proof of  Proposition 2.1}.\\
We may assume that $R$ is the rectangle $[0, w(R)]\times [0, h(R)]$. Then by (\ref{12}), there exist positive integers $M_{1}, M_{2}$ $(M\leq M_{1}\leq M_{2} \leq 3M)$ satisfying 
\[
M_{1}f(n_{0})^{-t}\leq w(R)<(M_{1}+1)f(n_{0})^{-t}, \;
M_{2}f(n_{0})^{-t}\leq h(R)<(M_{2}+1)f(n_{0})^{-t}.
\]
Put $n_{0}^{'}:=n_{0}+M_{1}M_{2}$. Then $M^{2}\leq n_{0}^{'}-n_{0}\leq 9M^{2}$. Rewrite the set $\{ n \; |\; n_{0}\leq n <n_{0}+M_{1}M_{2} \}$ as $\{ n_{i,j}\; |\;  0\leq i <M_{1}, 0\leq j <M_{2} \}$, where 
\[
n_{i,j}:=n_{0}+jM_{1}+i.
\]
Let $S_{i,j}$ be a square of sidelength $f(n_{i,j})^{-t}$. Following the idea in \cite{T}, we arrange this square by 
\begin{equation}
\label{4}
S_{i,j}=[x_{i,j},x_{i,j}+f(n_{i,j})^{-t}]\times [y_{i,j},y_{i,j}+f(n_{i,j})^{-t}],
\end{equation}
where 
\begin{equation}
\label{5}
x_{i,j}:=w(R)-\sum _{i^{'}=i}^{M_{1}}f(n_{i^{'},j})^{-t},
\end{equation}
\begin{equation}
\label{6}
y_{i,j}:=\sum _{j^{'}=0}^{j-1}f(n_{i,j^{'}})^{-t} \; (j\geq 1), \quad y_{i,0}:=0
\end{equation}
for $1 \leq i <M_{1}, 1\leq j <M_{2}$. We need some analysis on $f$. By mean value theorem, 
\[
f(n_{i,j})=f(n_{0}+jM_{1}+i)=f(n_{0})+f^{'}(c)(jM_{1}+i)
\]
for some $c\in (n_{0},n_{0}+ jM_{1}+i)$. Notice that $n_{0}+ jM_{1}+i \leq n_{0}+9M^{2}$. We denote the maximum value of $f^{'}(x)$ in $[n_{0},n_{0}+9M^{2}]$ by ${\cal D}_{n_{0},M}$. Then
\begin{align*}
f(n_{i,j})=&f(n_{0})+O_{1}(9M^{2}{\cal D}_{n_{0},M}) \\
=& f(n_{0})\left(1+O_{1}\left(\frac{9M^{2}{\cal D}_{n_{0},M}}{f(n_{0})}  \right)  \right).
\end{align*}
By (\ref{15}), we have
\begin{equation}
\label{7}
\frac{9M^{2}{\cal D}_{n_{0},M}}{f(n_{0})}<\frac{1}{100}.
\end{equation}
Hence
\[
f(n_{i,j})^{-t}= f(n_{0})^{-t}\left(1+O_{1}\left(\frac{9M^{2}{\cal D}_{n_{0},M}}{f(n_{0})}  \right)  \right)
\]
uniformly for $1/2<t<1$. Then by (\ref{5}) and (\ref{6}), it follows that 
\begin{equation}
\label{8}
x_{i,j}=w(R)-f(n_{0})^{-t}\left(M_{1}-i+1+ O_{1}\left(\frac{27M^{3}{\cal D}_{n_{0},M}}{f(n_{0})}  \right)    \right),
\end{equation}
\begin{equation}
\label{9}
y_{i,j}=f(n_{0})^{-t}\left(j+O_{1}\left(\frac{27M^{3}{\cal D}_{n_{0},M}}{f(n_{0})}  \right)     \right)
\end{equation}
for $0\leq i <M_{1}, 0\leq j <M_{2}$. 

\begin{lem}
$(1)$ $S_{i,j} (0\leq i <M_{1}, 0\leq j <M_{2})$ are included in $R$. \\
$(2)$ If $(i,j)\neq (i^{'},j^{'})$, the interiors of $S_{i,j}$ and $S_{i^{'},j^{'}}$ are disjoint.
\end{lem}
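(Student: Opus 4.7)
The plan is to verify both parts by direct computation from the explicit coordinates (\ref{5}), (\ref{6}) and their expansions (\ref{8}), (\ref{9}), handling the cases one at a time.

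For part (1), three of the four containment inequalities fall out of the definitions without any error analysis. The right $x$-bound is the telescoping $x_{i,j}+f(n_{i,j})^{-t}=w(R)-\sum_{i'=i+1}^{M_1}f(n_{i',j})^{-t}\leq w(R)$. The bottom $y$-bound $y_{i,j}\geq 0$ is built into (\ref{6}). The top $y$-bound reads $y_{i,j}+f(n_{i,j})^{-t}=\sum_{j'=0}^{j}f(n_{i,j'})^{-t}\leq M_2 f(n_0)^{-t}\leq h(R)$, using only that $f$ is increasing and that $f(n_{i,j'})\geq f(n_0)$. The left $x$-bound $x_{i,j}\geq 0$ is the one place where a derivative estimate is needed: combine (\ref{8}) with the consequence $27M^3{\cal D}_{n_0,M}/f(n_0)<3M/100$ of (\ref{7}) and with $w(R)\geq M_1 f(n_0)^{-t}$.

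For part (2), I would split into cases according to the relative position of the two squares. In the \emph{same column} ($i=i'$, $j\neq j'$), formula (\ref{6}) gives $y_{i,j+1}=y_{i,j}+f(n_{i,j})^{-t}$, so consecutive squares meet along a horizontal segment and have disjoint interiors. For \emph{adjacent columns} ($i'=i+1$) at the same row $j$, (\ref{5}) telescopes to $x_{i+1,j}=x_{i,j}+f(n_{i,j})^{-t}$, so the two squares share only a vertical segment. At different rows, monotonicity of $f$ takes over: if $j'>j$ then $x_{i+1,j'}>x_{i+1,j}$ puts $S_{i+1,j'}$ strictly right of $S_{i,j}$; if $j'<j$ the identity
\[
y_{i,j}-y_{i+1,j'}-f(n_{i+1,j'})^{-t}=\sum_{j''=0}^{j-1}f(n_{i,j''})^{-t}-\sum_{j''=0}^{j'}f(n_{i+1,j''})^{-t}
\]
is strictly positive by term-by-term comparison (each paired difference is positive since $n_{i,j''}<n_{i+1,j''}$ and $f$ is increasing, plus nonnegative extras when $j>j'+1$), so $S_{i,j}$ sits strictly above $S_{i+1,j'}$. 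For \emph{non-adjacent columns} ($|i-i'|\geq 2$), I would show $x_{i+1,j}\leq x_{i',j'}$ directly by subtraction, using (\ref{7}) to bound the small ``slant'' contribution by a quantity much smaller than $f(n_{i+1,j})^{-t}\asymp f(n_0)^{-t}$; this forces the $x$-intervals of the two squares to be disjoint.

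The main obstacle is pure bookkeeping: organizing the case analysis in part (2) so that the slanted lattice structure is handled uniformly across the three sub-cases, and tracking constants precisely enough in (\ref{8})--(\ref{9}) to secure the left $x$-bound in part (1). All the analytic input is already packaged in the error bound (\ref{7}); beyond that only strict monotonicity of $f$ is used.
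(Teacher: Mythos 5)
Your part (2) is essentially sound, but part (1) has a genuine gap at exactly the step you singled out. You propose to get the left bound $x_{i,j}\geq 0$ by combining the two-sided expansion (\ref{8}) with the bound $27M^{3}{\cal D}_{n_{0},M}/f(n_{0})<3M/100$ from (\ref{7}) and with $w(R)\geq M_{1}f(n_{0})^{-t}$. This cannot close: the slack $w(R)-M_{1}f(n_{0})^{-t}$ may be exactly $0$ (that is the worst case allowed by (\ref{12})), while the error in (\ref{8}) is two-sided, so for $i=0$ your chain yields only $x_{0,j}\geq -\frac{3M}{100}f(n_{0})^{-t}$ (and with the factor $M_{1}-i+1$ taken literally from (\ref{8}), even $x_{0,j}\geq -(1+\frac{3M}{100})f(n_{0})^{-t}$), which is vacuous; the same problem occurs for all small $i$. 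The paper proves this bound with no error analysis at all: since $f$ is increasing, every summand in (\ref{5}) is at most $f(n_{0})^{-t}$ and there are at most $M_{1}$ of them, so $x_{i,j}\geq w(R)-M_{1}f(n_{0})^{-t}\geq 0$ directly. (The paper's proof implicitly reads the upper summation limit in (\ref{5}) as $M_{1}-1$, i.e.\ the squares are flush with the right edge of $R$; with that reading exact monotonicity suffices, whereas the derivative estimate you invoke is not only unnecessary but insufficient, because approximating each term by $f(n_{0})^{-t}$ up to error loses the one-sided inequality you actually need.)

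On part (2): your same-column and adjacent-column cases coincide with the paper's purely monotonic argument, and your telescoping identities are correct. Two caveats, though. First, in the non-adjacent-column case with $j>j'$ you insist on horizontal separation and claim (\ref{7}) bounds the ``slant'' by a quantity much smaller than $f(n_{0})^{-t}$; that is off by a factor of order $M$. The slant of a column tail over a row offset of up to $3M$ rows involves up to about $3M$ terms, each displaced by up to $9M^{2}$ in the argument of $f$, so the natural bound is about $27M^{3}{\cal D}_{n_{0},M}f(n_{0})^{-t-1}$, which (\ref{7}) controls only by $\frac{3M}{100}f(n_{0})^{-t}$ --- larger than the one-column gap. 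You need the full strength of (\ref{15}) (giving at most $\frac{1}{176M}f(n_{0})^{-t}$), so your closing claim that all the analytic input is packaged in (\ref{7}) is false. Second, this detour is avoidable: for $i'\geq i$ and $j'<j$ the paper separates the squares vertically by monotonicity alone, $y_{i',j'}+f(n_{i',j'})^{-t}\leq y_{i',j}\leq y_{i,j}$, and this covers all of your ``different rows'' subcases, adjacent columns or not. Adopting that case split (together with the monotonicity proof of the left bound above) makes the whole lemma a consequence of the monotonicity of $f$ only, which is precisely how the paper argues; the error estimates (\ref{7})--(\ref{9}) are needed later, for Lemma 2.3 and the perimeter count, not here.
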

\begin{proof}
(1) Since $f$ is increasing, we see that
\[
0\leq w(R)-M_{1}f(n_{0})^{-t}\leq x_{i,j} \leq x_{i,j}+f(n_{i,j})^{-t}\leq w(R),
\]
\[
0\leq y_{i,j}\leq y_{i,j}+f(n_{i,j})^{-t}\leq M_{2}f(n_{0})^{-t}\leq h(R).
\]
Therefore, $S_{i,j}$ is included in $R$. \\
(2) If $i\geq i^{'}$, $j<j^{'}$, then $y_{i,j}+f(n_{i,j})^{-t}\leq y_{i,j^{'}}\leq y_{i^{'},j^{'}}$. Hence the interior of $S_{i,j}$ lies bottom to the interior of $S_{i^{'},j^{'}}$. Similarly if $i^{'}\geq i$, $j^{'}<j$. \\
If $i<i^{'}$, $j\leq j^{'}$, then $x_{i,j} +f(n_{i,j})^{-t}\leq x_{i^{'},j}\leq x_{i^{'},j^{'}}$. Hence the interior of $S_{i,j}$ lies left to the interior of $S_{i^{'},j^{'}}$. Similarly if $i^{'}<i$, $j^{'}\leq j$.
\end{proof}
\begin{lem}
Suppose 
\begin{equation}
\label{X}
9M^{2}\underset{0\leq x \leq 9M^{2}}{\max}f^{'}(n_{0}+x)f(n_{0}+x)^{-t-1}\leq f(n_{0}+9M^{2})^{-t}.
\end{equation}
Then, for  $0\leq i<M_{1}-1$ and $0\leq j <M_{2}-1$,  the squares $S_{i,j}$, $S_{i+1,j}$, $S_{i,j+1}$ and $S_{i+1,j+1}$ surround a rectangle
\begin{equation}
\label{10}
[x_{i+1, j}, x_{i+1,j+1}]\times [y_{i+1,j+1},y_{i,j+1}].
\end{equation}
\end{lem}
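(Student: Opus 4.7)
The plan is to reduce the geometric ``surround'' claim to four extension inequalities, each of which follows from (\ref{X}) via a single mean value theorem estimate.

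First I would use the telescoping identities $x_{i,j}+f(n_{i,j})^{-t}=x_{i+1,j}$ (from (\ref{5})) and $y_{i,j}+f(n_{i,j})^{-t}=y_{i,j+1}$ (from (\ref{6})) to match edges: the left side of the rectangle (\ref{10}) is simultaneously the right edge of $S_{i,j}$ and the left edge of $S_{i+1,j}$; its right side is simultaneously the left edge of $S_{i+1,j+1}$ and the right edge of $S_{i,j+1}$; its bottom is the top of $S_{i+1,j}$ and the bottom of $S_{i+1,j+1}$; and its top is the top of $S_{i,j}$ and the bottom of $S_{i,j+1}$. Non-degeneracy $x_{i+1,j}\le x_{i+1,j+1}$ and $y_{i+1,j+1}\le y_{i,j+1}$ is immediate from monotonicity of $f$ together with $n_{i',j+1}=n_{i',j}+M_{1}$ and $n_{i+1,j'}=n_{i,j'}+1$.

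The substance is then to verify that each of the four surrounding squares has sufficient tangential extent to cover the side of (\ref{10}) it faces. These are the four inequalities
\begin{align*}
\text{(a)}\ y_{i,j}\le y_{i+1,j+1},&\quad \text{(b)}\ y_{i+1,j+2}\ge y_{i,j+1},\\
\text{(c)}\ x_{i,j+1}\le x_{i+1,j},&\quad \text{(d)}\ x_{i+2,j}\ge x_{i+1,j+1},
\end{align*}
asserting respectively that $S_{i,j}$ reaches down to the bottom of the gap, $S_{i+1,j+1}$ reaches up to the top, $S_{i,j+1}$ reaches left to the left edge, and $S_{i+1,j}$ reaches right to the right edge. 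Rewriting each side via (\ref{5})--(\ref{6}), every one of (a)--(d) reduces to an inequality of the form
\[
f(n_{\ast})^{-t}\ \ge\ \sum\bigl(f(n_{\bullet})^{-t}-f(n_{\circ})^{-t}\bigr),
\]
a sum of at most $3M$ terms in which $n_{\bullet}-n_{\circ}$ equals $1$ (in (a), (b)) or $M_{1}\le 3M$ (in (c), (d)), and all indices lie in $[n_{0},n_{0}+9M^{2}]$.

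The key analytic step is the mean value estimate
\[
f(m)^{-t}-f(n)^{-t}\le t(n-m)\max_{x\in[0,9M^{2}]}f'(n_{0}+x)f(n_{0}+x)^{-t-1}
\]
valid for $n_{0}\le m<n\le n_{0}+9M^{2}$. Summing bounds each right-hand side above by $9M^{2}\max_{x\in[0,9M^{2}]}f'(n_{0}+x)f(n_{0}+x)^{-t-1}$, which by (\ref{X}) is at most $f(n_{0}+9M^{2})^{-t}\le f(n_{\ast})^{-t}$. This closes all four inequalities simultaneously. The main obstacle is purely bookkeeping: one must correctly identify, in each of (a)--(d), which index is fixed as $n_{\ast}$ and which index-shift is being summed, and check that $n_{\ast}\le n_{0}+9M^{2}$ in every case; no further analytic ingredient is needed beyond one application of the mean value theorem.
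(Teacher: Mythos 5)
Your proposal is correct and matches the paper's own argument: your inequalities (a)--(d) are exactly the four underlined nontrivial relations in the paper (rewritten via the telescoping identities $x_{i+1,j}=x_{i,j}+f(n_{i,j})^{-t}$, $y_{i,j+1}=y_{i,j}+f(n_{i,j})^{-t}$), and you close them the same way, by expressing each difference as a telescoping sum, applying the mean value theorem termwise, bounding the total by $9M^{2}\max_{0\leq x\leq 9M^{2}}f^{'}(n_{0}+x)f(n_{0}+x)^{-t-1}$, and invoking (\ref{X}) together with the monotonicity of $f$ to compare with $f(n_{*})^{-t}\geq f(n_{0}+9M^{2})^{-t}$. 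No substantive difference from the paper's proof.
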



\begin{tikzpicture}[scale=3]
\draw(0,0)--(3,0);
\draw(0,0)--(0,3.1);
\draw( 0 ,3.1 )--(3.0  , 3.1 );
\draw( 3.0 ,0 )--(3.0  ,3.1  );
\draw( 0.5 , 0)--(  0.5,  1);
\draw( 0.5 ,1 )--( 1.5 , 1 );
\draw( 1.5 ,1 )--( 1.5 ,  0);
\draw( 1.5 , 0)--( 1.5 ,  0.8);
\draw(  1.5, 0.8)--( 2.3 , 0.8 );
\draw( 2.3 ,0.8 )--( 2.3 ,0  );
\draw( 2.3 ,0.7 )--( 3.0 , 0.7 );
\draw( 0.9 ,1 )--( 0.9 , 1.8 );
\draw(  0.9,1.8 )--( 1.7 , 1.8 );
\draw( 1.7 ,1.8 )--( 1.7 , 1 );
\draw( 1.7 ,1 )--( 0.9 ,  1);
\draw( 1.7 ,0.8 )--( 1.7 , 1.5 );
\draw( 1.7 ,1.5 )--( 2.4 ,1.5  );
\draw(  2.4, 1.5)--( 2.4 ,0.8  );
\draw( 2.4, 0.8 )--( 1.7 , 0.8 );
\draw( 2.4 , 0.7)--( 2.4 ,1.3  );
\draw(2.4  , 1.3)--(3.0  ,1.3  );
\draw(3.0  , 1.3)--(3.0  ,0.7  );
\draw( 3.0 , 0.7)--(2.4  ,0.7  );
\draw(1.2  ,1.8 )--( 1.2 ,  2.5);
\draw(  1.2,2.5 )--(1.9  , 2.5 );
\draw( 1.9 , 2.5)--(1.9  , 1.8 );
\draw( 1.9 ,1.8 )--(1.2  ,1.8  );
\draw( 1.9 ,1.5 )--( 1.9 ,2.1  );
\draw(1.9  , 2.1)--(2.5  , 2.1 );
\draw( 2.5 , 2.1)--(2.5  ,1.5  );
\draw( 2.5 , 1.5)--(1.9  , 1.5 );
\draw( 2.5 , 1.3)--( 2.5 ,1.8  );
\draw(2.5  , 1.8)--(3.0  ,1.8  );
\draw[dotted](0,1)--(0.5,1);
\draw[dotted]( 0 ,1.8   )--( 0.9  ,  1.8 );
\draw[dotted](   0,  2.5 )--( 1.2  ,2.5   );
\draw[dotted](  1.2 , 3.1  )--( 1.2  , 2.5  );
\draw[dotted](  1.9 ,  3.1 )--( 1.9  ,  2.5 );
\draw[dotted](  2.5 , 3.1  )--( 2.5  , 2.1  );
\draw(0,0)node[below]{(0,0)};
\draw(3,0)node[below]{($w(R)$,0)};
\draw(0,3.1)node[above]{(0,$h(R)$)};
\draw(3,3.1)node[above]{($w(R)$,$h(R)$)};
\draw(1.3,1.4)node{$S_{i,j}$}; 
\draw(2.05,1.15)node{$S_{i+1,j}$}; 
\draw(2.2,1.8)node{$S_{i+1,j+1}$}; 
\draw(1.55,2.15)node{$S_{i,j+1}$}; 
\filldraw[fill=lightgray](1.7,1.5)--(1.9,1.5)--(1.9,1.8)--(1.7,1.8)--(1.7,1.5);
\end{tikzpicture}


\begin{proof}
This fact follows from the following equalities and inequalities. 
\begin{align*}
&x_{i+1,j}=x_{i,j}+f(n_{i,j})^{-t}, \\
&\underline{y_{i,j}<y_{i+1,j}+f(n_{i+1,j})^{-t}}<y_{i,j}+f(n_{i,j})^{-t}, \\
&x_{i,j}<\underline{x_{i,j+1}<x_{i,j}+f(n_{i,j})^{-t}} \\
&y_{i,j+1}=y_{i,j}+f(n_{i,j})^{-t} \\
&x_{i+1,j+1}=x_{i,j+1}+f(n_{i,j+1})^{-t}, \\
&y_{i+1,j+1}<\underline{y_{i,j+1}<y_{i+1,j+1}+f(n_{i+1,j+1})^{-t}}, \\
&x_{i+1,j}<\underline{x_{i+1,j+1}<x_{i+1,j}+f(n_{i+1,j})^{-t}}, \\
&y_{i+1,j+1}=y_{i+1,j}+f(n_{i+1,j})^{-t}.
\end{align*}
Among these relations, the underlined four (essentially two) inequalities are nontrivial, and others easily follow from the definitions of $x_{i,j}$ and $y_{i,j}$.  By definition, 
\[
y_{i,j}-y_{i+1,j}=\sum _{j^{'}=0}^{j-1}\left( f(n_{0}+j^{'}M_{1}+i)^{-t}-f(n_{0}+j^{'}M_{1}+i+1)^{-t} \right).
\]
By mean value theorem, we have
\begin{align*}
&  f(n_{0}+j^{'}M_{1}+i)^{-t}-f(n_{0}+j^{'}M_{1}+i+1)^{-t} \\
& \quad =-\frac{\partial}{\partial u}f(n_{0}+j^{'}M_{1}+u) ^{-t}|_{u=c_{i,j^{'}}} \\
& \quad =tf^{'}(n_{0}+j^{'}M_{1}+c_{i,j^{'}})f(n_{0}+j^{'}M_{1}+c_{i,j^{'}})^{-t-1}
\end{align*}
for some $c_{i,j^{'}}\in (i, i+1)$. Hence
\begin{align*}
y_{i,j}-y_{i+1,j}=& t\sum _{j^{'}=0}^{j-1}f^{'}(n_{0}+j^{'}M_{1}+c_{i,j^{'}})f(n_{0}+j^{'}M_{1}+c_{i,j^{'}})^{-t-1} \\
& \leq 3M \underset{0\leq x \leq 9M^{2}}{\max}f^{'}(n_{0}+x)f(n_{0}+x)^{-t-1}.
\end{align*}
On the other hand, since $f$ is increasing, $f(n_{i+1,j})^{-t}$ is equal or larger than \\ $f(n_{0}+9M^{2})^{-t}$. Hence if 
\begin{equation}
\label{Y}
 3M\underset{0\leq x \leq 9M^{2}}{{\max}}f^{'}(n_{0}+x)f(n_{0}+x)^{-t-1}\leq f(n_{0}+9M^{2})^{-t}
\end{equation}
holds, then the first and third underlined inequalities hold. Moreover, the condition (\ref{Y}) is weaker than (\ref{X}). Hence the first and third underlined inequalities follow from (\ref{X}). We now prove the second and fourth inequalities assuming (\ref{X}). Again by definition,
\[
x_{i,j+1}-x_{i,j}=\sum _{i^{'}=i}^{M_{1}}\left(f(n_{0}+jM_{1}+i^{'})^{-t}-f(n_{0}+(j+1)M_{1}+i^{'})^{-t} \right).
\]
By mean value theorem,
\begin{align*}
& f(n_{0}+jM_{1}+i^{'})^{-t}-f(n_{0}+(j+1)M_{1}+i^{'})^{-t} \\
& \quad =-\frac{\partial}{\partial u}f(n_{0}+uM_{1}+i^{'})^{-t}|_{u=c_{i^{'},j}} \\
& \quad =tM_{1}f^{'}(n_{0}+c_{i^{'},j}M_{1}+i^{'})f(n_{0}+c_{i^{'},j}M_{1}+i^{'})^{-t-1}.
\end{align*}
for some $c_{i^{'},j}\in (j, j+1)$. Hence by (\ref{X}), 
\begin{align*}
x_{i,j+1}-x_{i,j}=& tM_{1} \sum _{i=i^{'}}^{M_{1}}f^{'}(n_{0}+c_{i^{'},j}M_{1}+i^{'})f(n_{0}+c_{i^{'},j}M_{1}+i^{'})^{-t-1} \\
\leq & 9M^{2} \underset{0\leq x \leq 9M^{2}}{{\max}}f^{'}(n_{0}+x)f(n_{0}+x)^{-t-1}\leq f(n_{0}+9M^{2})^{-t}.
\end{align*}
On the other hand, $f(n_{i,j})^{-t}$ is equal or larger than $f(n_{0}+9M^{2})^{-t}$. Thus we obtain the second and fourth underlined inequalities under the assumption of (\ref{X}). 
\end{proof}

By (\ref{15}), the left hand side of (\ref{X}) is at most $(1/264M^{2})f(n_{0})^{-t}$. Hence (\ref{X}) is satisfied if (\ref{15.5}) holds.  Summing up, the rectangle $R$ is perfectly packed by the following five types of rectangles: \\
Type I. The squares $S_{i,j}$ $(0\leq i <M_{1}, 0\leq j <M_{2})$. \\
Type II. The rectangles  (\ref{10})  surrounded by Type I squares. By (\ref{8}) and (\ref{9}), the perimeters of these surrounded rectangles are $O_{1}\left( \frac{216M^{3}{\cal D}_{n_{0},M}}{f(n_{0})^{1+t}} \right)$ and the number of these rectangles is at most $9M^{2}$. \\
Type III. The rectangles $[0, x_{0,j}]\times [y_{0,j},y_{0,j}+f(n_{0,j})^{-t}]$ $(0\leq j <M_{2})$. By (\ref{8}), the perimeters of these rectangles are at most $4f(n_{0})^{-t}+\frac{54M^{3}{\cal D}_{n_{0},M}}{f(n_{0})^{t+1}}$ and the number of these rectangles is at most $3M$. \\
Type IV. The rectangles $[x_{i, M_{2}-1}, x_{i, M_{2}-1}+f(n_{i,M_{2}-1})^{-t}]\times [y_{i, M_{2}-1}+f(n_{i, M_{2}-1})^{-t}, h(R)]$ $(0\leq i<M_{1})$. The perimeters of these rectangles are at most $4f(n_{0})^{-t}+\frac{54M^{3}{\cal D}_{n_{0},M}}{f(n_{0})^{t+1}}$, and the number of these rectangles is at most $3M$. \\
Type V. The rectangle $[0, x_{0, M_{2}-1}] \times [y_{0,M_{2}-1}+f(n_{0, M_{2}-1})^{-t}, h(R)]$. The perimeter of this rectangle is at most $ 4f(n_{0})^{-t}+\frac{108M^{3}{\cal D}_{n_{0},M}}{f(n_{0})^{1+t}} $. \\
Put ${\cal R}:=R\backslash \{S_{i,j} \}_{i,j}$. By the above argument, it follows that the widths of the rectangles contained in ${\cal R}$  exceed neither $54M^{3}{\cal D}_{n_{0},M}f(n_{0})^{-t-1}$ nor $f(n_{0})^{-t}+  27M^{3}{\cal D}_{n_{0},M}f(n_{0})^{-t-1}$, so the widths are at most $2f(n_{0})^{-t}$ by (\ref{15}).     Moreover, we see that the total perimeter of ${\cal R}$ is
\begin{equation}
\label{11}
\begin{aligned}
\mathrm{perim}({\cal R})\leq &  \frac{216M^{3}{\cal D}_{n_{0}, M}}{f(n_{0})^{1+t}}\times 9M^{2}+\left(4f(n_{0})^{-t}+ \frac{54M^{3}{\cal D}_{n_{0}, M}}{f(n_{0})^{1+t}}   \right)\times 6M\\
& \quad +\left(4f(n_{0})^{-t}+ \frac{108M^{3}{\cal D}_{n_{0}, M}}{f(n_{0})^{1+t}}   \right) \\
\leq &\frac{2376M^{5}{\cal D}_{n_{0},M}}{f(n_{0})^{t+1}}+\frac{49}{2}Mf(n_{0})^{-t} \\
\leq &25 Mf(n_{0})^{-t}.
\end{aligned}
\end{equation}
Thus the proof of Proposition 2.1 is completed.  \hspace{6.8cm} $\Box$

\subsection{Perfectly packing some families of rectangles}
Put $\delta =1-t$ and define the {\it weighted total perimeter} of ${\cal R}$ by
\[
\mathrm{perim}_{\delta}({\cal R})=\sum _{R\in {\cal R}}w(R)^{\delta}h(R).
\]
\begin{prop}
Suppose that $M$, $f$, $K$, $l$ and $1/2<t<1$ satisfy conditions (\ref{16}), (\ref{20}),  (\ref{21}), (\ref{25}) and (\ref{M})    for any $n_{0}\geq N_{0}$, $n_{1}\geq N_{1}$. Fix positive integers $n_{\max} \geq n_{0} (\geq N_{0})$. Let ${\cal R}$ be a family of finite number of rectangles with disjoint interiors satisfying 
\begin{equation}
\label{17}
\mathrm{area} ({\cal R})=\sum _{n=n_{0}}^{\infty}\frac{1}{f(n)^{2t}}, 
\end{equation}
\begin{equation}
\label{18}
\mathrm{perim}_{\delta}({\cal R})\leq M^{-1+\frac{\delta}{2}}\sum _{n=1}^{n_{0}-1}\frac{1}{f(n)^{t+\delta t}}
\end{equation}
and 
\begin{equation}
\label{19}
\sup _{R\in {\cal R}}h(R) \leq 1.
\end{equation}
Then, squares of sidelength $f(n)^{-t}$ $(n_{0}\leq n <n_{\max})$ can be packed into $\bigcup _{R\in {\cal R}}R$. 
\end{prop}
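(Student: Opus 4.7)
The plan is a strong induction on $n_{\max}-n_{0}$ whose inductive engine is Proposition 2.1. In the inductive step, I preprocess $\mathcal{R}$ by subdividing any rectangle whose larger side exceeds $3Mf(n_{0})^{-t}$ into pieces both of whose sides lie in $[Mf(n_{0})^{-t},3Mf(n_{0})^{-t}]$: since widths remain the smaller of the two sides, this preserves $\mathrm{area}(\mathcal{R})$, does not increase $\mathrm{perim}_{\delta}(\mathcal{R})$, and respects (\ref{19}). I then select a rectangle $R_{0}\in\mathcal{R}$ satisfying the hypotheses (\ref{12}) of Proposition 2.1, apply that proposition to pack the squares of indices $n_{0},\ldots,n_{0}'-1$ into $R_{0}$ for some $n_{0}'\in[n_{0}+M^{2},n_{0}+9M^{2}]$, replace $R_{0}$ in $\mathcal{R}$ by the residual family $\mathcal{R}_{R_{0}}$, and finish by invoking the induction hypothesis at level $n_{0}'$.

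Existence of a suitable $R_{0}$ should follow from a pigeonhole. If every rectangle in $\mathcal{R}$ had width strictly less than $Mf(n_{0})^{-t}$, then using $w(R)^{\delta}h(R)=\mathrm{area}(R)/w(R)^{t}$ I could estimate
\[
\mathrm{perim}_{\delta}(\mathcal{R})\geq M^{-t}f(n_{0})^{t^{2}}\,\mathrm{area}(\mathcal{R}).
\]
Combining this with the area lower bound (\ref{21}), the weighted perimeter upper bound coming from (\ref{18}) and (\ref{20}), and the comparison (\ref{25}) between $\lambda_{1}$ and $\nu_{2}$, one obtains an inequality of the shape $M^{\delta /2}\leq c_{2}\xi_{2}(t)/(c_{\lambda ,\nu}d_{1}\eta_{1}(t))$ (up to a factor absorbed into the $t$--dependence), which contradicts the first clause of (\ref{M}).

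Granted $R_{0}$ and an application of Proposition 2.1, the residual family $\mathcal{R}_{R_{0}}$ has widths $\leq 2f(n_{0})^{-t}$ and ordinary total perimeter $\leq 25Mf(n_{0})^{-t}$. For the updated family $\mathcal{R}^{\mathrm{new}}=(\mathcal{R}\setminus\{R_{0}\})\cup\mathcal{R}_{R_{0}}$, conservation of area (\ref{17}) is immediate from Proposition 2.1, and the height bound (\ref{19}) follows from $\sup h\leq 3Mf(n_{0})^{-t}\ll 1$ for $n_{0}\geq N_{0}$ (using (\ref{34}) and $f(n_{0})^{t}\gg M$).

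The principal obstacle is reinstating (\ref{18}) at the level $n_{0}'$. From the bounds above,
\[
\mathrm{perim}_{\delta}(\mathcal{R}_{R_{0}})\leq (2f(n_{0})^{-t})^{\delta}\cdot\tfrac{25}{2}Mf(n_{0})^{-t}\leq 25Mf(n_{0})^{-t-\delta t},
\]
so combining with the inductive hypothesis the verification reduces to
\[
25Mf(n_{0})^{-t-\delta t}\leq M^{-1+\delta /2}\sum_{n=n_{0}}^{n_{0}'-1}f(n)^{-t-\delta t}.
\]
Since $n_{0}'-n_{0}\geq M^{2}$, and since (\ref{31}) guarantees $[n_{0},n_{0}'-1]\subset[n_{0},(1+l)n_{0}]$ so that (\ref{16}) gives $f(n)\leq K^{-1}f(n_{0})$ there, the right-hand side is at least $M^{1+\delta /2}K^{t(2-t)}f(n_{0})^{-t-\delta t}$. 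The second clause of (\ref{M}) is precisely the inequality $M^{\delta /2}K^{t(2-t)}\geq 50$, so the required bound holds with a factor of two to spare and the induction closes. This weighted-perimeter balance is where the real work lies, and the shape of (\ref{M}) was evidently designed to make exactly this step go through.
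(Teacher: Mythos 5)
Your inductive framework, the pigeonhole selection of a wide rectangle, and the final budget computation showing that the second clause of (\ref{M}) (i.e.\ $M^{\delta/2}K^{t(2-t)}\geq 50$) closes the weighted-perimeter estimate are all in the spirit of the paper. But the preprocessing step on which your construction of $R_{0}$ rests contains a genuine gap, in two ways. First, a rectangle of width less than $Mf(n_{0})^{-t}$ (and such rectangles are unavoidable: the residual families produced by Proposition 2.1 have width at most $2f(n_{0})^{-t}$) cannot be subdivided into pieces both of whose sides lie in $[Mf(n_{0})^{-t},3Mf(n_{0})^{-t}]$, since subdivision can only decrease widths. Second, and more seriously, the claim that subdivision ``does not increase $\mathrm{perim}_{\delta}$'' is false whenever a cut splits the width: if $w=w_{1}+w_{2}$ then $w_{1}^{\delta}h+w_{2}^{\delta}h\geq w^{\delta}h$ by subadditivity of $x\mapsto x^{\delta}$, with a large excess when many such cuts are made. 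Quantitatively, the pigeonholed rectangle may have width $\gg Mf(n_{0})^{-t}$ and height as large as $1$; even a single width-cut producing a strip of width $Mf(n_{0})^{-t}$ and height $h(R)$ adds roughly $(Mf(n_{0})^{-t})^{\delta}h(R)\sim M^{\delta}f(n_{0})^{-\delta t}$ to $\mathrm{perim}_{\delta}$, whereas one application of Proposition 2.1 only enlarges the budget in (\ref{18}) by about $M^{-1+\delta/2}\cdot 9M^{2}\cdot K^{t(2-t)}f(n_{0})^{-t-\delta t}$, which is smaller by a factor on the order of $f(n_{0})^{t}/M^{1-\delta/2}\ggg 1$. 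So the invariant (\ref{18}) cannot be reinstated at level $n_{0}'$ under your scheme, and the induction does not close; chopping \emph{all} large rectangles into $O(Mf(n_{0})^{-t})$-size blocks is even worse, multiplying their weighted perimeter by an unbounded factor.

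The missing idea, which is how the paper proceeds, is to never leave intermediate rectangles of width $\approx Mf(n_{0})^{-t}$ in the family. From the pigeonholed rectangle $R$ with $w(R)\geq 2Mf(n_{0})^{-t}$ one cuts off a single strip $R_{*}$ of width exactly $Mf(n_{0})^{-t}$ and full height $h(R)$, keeps the complementary piece $R_{0}$ intact (so $w(R_{0})^{\delta}h(R)\leq w(R)^{\delta}h(R)$ and no cost is incurred there), slices $R_{*}$ along its height into $m\leq M^{-1}f(n_{0})^{t}$ blocks of bounded eccentricity, and applies Proposition 2.1 to \emph{all} $m$ blocks within the same induction step, advancing $n_{0}$ to $n_{m}'$ with $n_{m}'\leq n_{0}+9Mf(n_{0})^{t}\leq (1+l)n_{0}$ by (\ref{31}). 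Then the only new contribution to $\mathrm{perim}_{\delta}$ comes from the $m$ residual families of width $\leq 2f(n_{0})^{-t}$, about $25\cdot 2^{\delta}Mf(n_{0})^{-t-\delta t}$ each, and this is covered because the budget grows by $M^{-1+\delta/2}\sum_{n=n_{0}}^{n_{m}'-1}f(n)^{-t-\delta t}$, i.e.\ by at least $M^{1+\delta/2}K^{t(2-t)}f(n_{0})^{-t-\delta t}$ \emph{per block} --- exactly the balance you verified, but it must be run with the whole strip consumed at once rather than with one application of Proposition 2.1 per induction step.
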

\begin{proof}
Fix $n_{\max}$ and prove the statement by a downward induction on $n_{0}$. The above proposition is trivial if $n_{\max}=n_{0}$. So we suppose that $n_{\max}>n_{0}$ and that the claim has already been proved for larger $n_{0}$. 
 Then, by (\ref{18}) and (\ref{20}), we have
 \begin{equation}
 \label{18.5}
 \sum _{R\in {\cal R}}w(R)^{\delta}h(R)\leq M^{-1+\frac{\delta}{2}}\frac{c_{2}\xi _{2}(t)\nu _{2}(n_{0})}{f(n_{0})^{t+\delta t}}.
 \end{equation}
 On the other hand, by (\ref{17}) and (\ref{21}), we have
 \begin{equation}
 \label{23}
 \sum _{R\in {\cal R}}w(R)h(R) \geq \frac{d_{1}\eta _{1}(t)\lambda _{1}(n_{0})}{f(n_{0})^{2t}}.
 \end{equation}
Hence by the pigeonhole principle, there exists a rectangle $R \in {\cal R}$ that satisfies 
\[
w(R)^{1-\delta}\geq \frac{d_{1}\eta _{1}(t)\lambda _{1}(n_{0})f(n_{0})^{-2t}}{M^{-1+\frac{\delta}{2}}c_{2}\xi _{2}(t)\nu _{2}(n_{0})f(n_{0})^{-t-\delta t}}.
\]
Therefore, we have
\begin{align*}
w(R)&\geq (M^{1-\frac{\delta}{2}})^{\frac{1}{1-\delta}}\left(\frac{d_{1}\eta _{1}(t)\lambda _{1}(n_{0})}{c_{2}\xi _{2}(t)\nu _{2}(n_{0})}   \right)^{\frac{1}{1-\delta}}f(n_{0})^{-t} \\
& \geq M^{1+\frac{\delta}{2}} \left(\frac{d_{1}\eta _{1}(t)\lambda _{1}(n_{0})}{c_{2}\xi _{2}(t)\nu _{2}(n_{0})}   \right)^{\frac{1}{t}}f(n_{0})^{-t},
\end{align*}
since $(1-\frac{\delta}{2})/(1-\delta)>1+\frac{\delta}{2}$. Since $\lambda _{1}$ and $\nu _{2}$ satisfy (\ref{25}), we have
\[
w(R)\geq M^{1+\frac{\delta}{2}} \left(\frac{c_{\lambda ,\nu}d_{1}\eta _{1}(t)}{c_{2}\xi _{2}(t)}   \right)^{\frac{1}{t}}f(n_{0})^{-t}.
\]
Since $M$ satisfies (\ref{M}), it follows that 
\[
M^{\frac{\delta}{2}}\left( \frac{c_{\lambda ,\nu}d_{1}\eta _{1}(t)}{c_{2}\xi _{2}(t)} \right)^{\frac{1}{t}}\geq 2.
\]
Therefore,
\begin{equation}
\label{27}
h(R)\geq w(R) \geq 2Mf(n_{0})^{-t}.
\end{equation}
We cut the rectangle $R$ into a rectangle $R_{0}$ of dimension $(w(R)-Mf(n_{0})^{-t})\times h(R)$ and a rectangle $R_{*}$ of dimension $Mf(n_{0})^{-t}\times h(R)$. Then cut off the squares of sidelength $Mf(n_{0})^{-t}$ from $R_{*}$ until the height of the remaining rectangle is below $2Mf(n_{0})^{-t}$. Thus $R_{*}$ is decomposed into rectangles $R_{1}, \ldots ,R_{m}$. These rectangles are of dimension $Mf(n_{0})^{-t}\times h(R_{i})$, where 
\begin{equation}
\label{insufficient}
Mf(n_{0})^{-t}\leq h(R_{i})<2Mf(n_{0})^{-t} \quad (i=1, \ldots ,m)
\end{equation}
with disjoint interiors, and 
\[
\sum _{i=1}^{m}h(R_{i})=h(R).
\]
Since $h(R)\leq 1$, it follows that 
\begin{equation}
\label{28}
m\leq M^{-1}f(n_{0})^{t}.
\end{equation}
This gives a perfectly packing of $R $
\[
R=R_{1}\cup R_{2}\cup \cdots \cup R_{m}.
\]
We adapt Proposition 2.1 $m$ times. Then there exists a sequence of positive integers $n_{0}=n_{0}^{'}<n_{1}^{'}<\cdots <n_{m}^{'}$ such that 
\begin{equation}
\label{29}
M^{2}\leq n_{i+1}^{'}-n_{i}^{'}\leq 9M^{2} \quad (0\leq i \leq m-1)
\end{equation}
and each $R_{i}$ is perfectly packed by squares of sidelength $f(n)^{-t}$ $(n_{i-1}^{'}\leq n <n_{i}^{'})$ and a family  ${\cal R}_{i}$ of finite number of rectangles with disjoint interiors satisfying
\begin{equation}
\label{30}
\mathrm{perim}({\cal R}_{i})\leq 25Mf(n_{0})^{-t}
\end{equation}
and the width of each rectangle in ${\cal R}_{i}$ is at most $2f(n_{0})^{-t}$, provided that the height of $R_{i}$ satisfies
\begin{equation}
\label{sufficient}
Mf(n_{i-1}^{'})^{-t}\leq h(R_{i}) \leq 3Mf(n_{i-1}^{'})^{-t}
\end{equation}
for $i=1, \ldots ,m$. Let us confirm that (\ref{sufficient}) is satisfied. Since $Mf(n_{i-1}^{'})^{-t}\leq Mf(n_{0})^{-t}$, $Mf(n_{i-1}^{'})^{-t}\leq h(R_{i})$ is satisfied. By (\ref{28}) and (\ref{29}),
\[
n_{i}^{'}\leq n_{0}+9Mf(n_{0})^{t}.
\]
Combining this with (\ref{31}), we have
\begin{equation}
\label{32}
n_{0}\leq n_{i}^{'}\leq (1+l)n_{0}
\end{equation}
for $i=1, \ldots ,m$. Since $h(R_{i})$ satisfies (\ref{insufficient}), to prove $h(R_{i})\leq  3Mf(n_{i-1}^{'})^{-t}$, it suffices to see $2Mf(n_{0})^{-t}\leq 3Mf(n_{i-1}^{'})^{-t}$, which is equivalent to $f(n_{0})\geq (2/3)^{\frac{1}{t}}f(n_{i-1}^{'})$. For $i=1, \ldots ,m$, these inequalities are  valid because of (\ref{16}) and (\ref{32}). Hence (\ref{sufficient}) holds and the inductive argument can be applied to the rectangles $R_{1}, \ldots ,R_{m}$. Put
\[
{\cal R}^{'}:=({\cal R}\backslash \{R \}) \cup \{R_{0} \} \cup \bigcup _{i=1}^{m}{\cal R}_{i}.
\]
By the above argument, $\bigcup _{R^{'}\in {\cal R}}R^{'}$ is perfectly packed by squares of sidelength $f(n)^{-t}$, $n_{0}\leq n<n_{m}^{'}$ and rectangles in ${\cal R}^{'}$. If $n_{m}^{'}\geq n_{\max}$, the induction process is completed. So we assume $n_{m}^{'}<n_{\max}$. Before the evaluation of $\textrm{perim}_{\delta}({\cal R}^{'})$, we see that for $i=1, \ldots ,m$,
\begin{equation}
\label{fn0}
f(n_{0})^{-t-\delta t}\leq M^{-2}K^{-t-\delta t}\sum _{n=n_{i-1}^{'}}^{n_{i}^{'}-1}f(n)^{-t-\delta t}
\end{equation}
holds, because $f(n_{0})\geq Kf(n)$ for $n_{i-1}^{'}\leq n <n_{i}^{'}-1$. By (\ref{30}), (\ref{18}), (\ref{fn0}) and $w(R^{'})\leq 2f(n_{0})^{-t}$ for $R^{'}\in {\cal R}_{i}$, it follows that
\begin{align*}
\mathrm{perim}_{\delta}({\cal R}^{'}) & =\mathrm{perim}_{\delta}({\cal R}) -w(R)^{\delta}h(R)+w(R_{0})^{\delta}h(R)+\sum _{i=1}^{m}\sum _{R^{'}\in {\cal R}_{i}}w(R^{'})^{\delta }h(R^{'})  \\
& \leq \mathrm{perim}_{\delta}({\cal R}) +\sum _{i=1}^{m}O_{1}(2^{\delta}f(n_{0})^{-\delta t}\mathrm{perim}({\cal R}_{i})) \\
&  \leq  \mathrm{perim}_{\delta}({\cal R}) +2^{\delta}\cdot 25M\sum _{i=1}^{m}O_{1}(f(n_{0})^{-t-\delta t}) \\
& \leq \mathrm{perim}_{\delta}({\cal R}) +\frac{50}{K^{t+\delta t}M}\sum _{i=1}^{m}O_{1}\left( \sum _{n=n_{i-1}^{'}}^{n_{i}^{'}-1}f(n)^{-t-\delta t} \right) \\
& \leq M^{-1+\frac{\delta}{2}}\sum _{n=1}^{n_{0}-1}\frac{1}{f(n)^{t+\delta t}} +\frac{50}{K^{t+\delta t}M}\sum _{n=n_{0}}^{n_{m}^{'}-1}\frac{1}{f(n)^{t+\delta t}}.
\end{align*}
Since $M\geq (50/K^{t(2-t)})^{\frac{2}{1-t}}$ by (\ref{M}), we have
\[
\mathrm{perim}_{\delta}({\cal R}^{'})\leq M^{-1+\frac{\delta}{2}}\sum _{n=1}^{n_{m}^{'}-1}\frac{1}{f(n)^{t+\delta t}}.
\]
Therefore, ${\cal R}^{'}$ satisfies (\ref{18}) with $n_{0}$ replaced with $n_{m}^{'}$. Moreover,
\[
\mathrm{area}({\cal R}^{'})=\mathrm{area}({\cal R})-\sum _{n=n_{0}}^{n_{m}^{'}-1}\frac{1}{f(n)^{2t}}=\sum _{n=n_{m}^{'}}^{\infty}\frac{1}{f(n)^{2t}}.
\]
Hence the identity (\ref{17}) with $n_{0}$ replaced with $n_{m}^{'}$ is obtained. Finally, the heights of the rectangles in ${\cal R}^{'}$ are at most $1$. Therefore, by induction hypothesis, squares of sidelength $f(n)^{-t}$ with $n_{m}^{'}\leq n <n_{\max}$ can be packed  into $\bigcup _{R^{'}\in {\cal R}^{'}}R^{'}$. On the other hand, squares of sidelength $f(n)^{-t}$ for $n_{0}\leq n<n_{m}^{'}$ have already been packed into ${\cal R}\backslash {\cal R}^{'}$. Hence the induction is now completed. 
\end{proof}

\subsection{Proof of Theorem 1.1}
Suppose that $f$, $t$, $K$, $l$, $M$ and $N_{0}$ satisfy all the conditions in the statement of Theorem 1.1.  Let $S$ be a square of area $\sum _{n=n_{0}}^{\infty}f(n)^{-2t}$. By (\ref{21}), the sidelength of $S$ is at most $d_{2}^{\frac{1}{2}}\eta _{2}(t)^{\frac{1}{2}}\lambda _{2}(n_{0})^{\frac{1}{2}}f(n_{0})^{-t}$. Therefore,
\begin{align*}
\mathrm{perim}_{\delta}(S)&\leq (d_{2}^{\frac{1}{2}}\eta _{2}(t)^{\frac{1}{2}}\lambda _{2}(n_{0})^{\frac{1}{2}}f(n_{0})^{-t})^{1+\delta} =(d_{2}\eta _{2}(t)\lambda _{2}(n_{0}))^{\frac{1+\delta}{2}}f(n_{0})^{-(1+\delta)t}.
\end{align*}
On the other hand, by (\ref{20}), 
\[
\sum _{n=1}^{n_{0}-1}\frac{1}{f(n)^{t+\delta t}}\geq c_{1}\xi _{1}(t)\nu _{1}(n_{0})f(n_{0})^{-(1+\delta)t}.
\]
Hence the condition (\ref{18}) is satisfied if
\[
(d_{2}\eta _{2}(t)\lambda _{2}(n_{0}))^{\frac{1+\delta}{2}}f(n_{0})^{-(1+\delta)t}\leq c_{1}M^{-1+\frac{\delta}{2}}\xi _{1}(t)\nu _{1}(n_{0})f(n_{0})^{-(1+\delta)t},
\]
and the above inequality is equivalent to (\ref{33}).  Furthermore, the height of $S$ is at most $1$ provided that (\ref{34}) is satisfied. Consequently, if the functions and parameters satisfy these conditions, one can apply Proposition 2.4 to  ${\cal R}=S$ and see that  for any $n_{\max}>n_{0}\; (\geq N_{0})$, the squares of sidelength $f(n)^{-t}$ $(n_{0}\leq n <n_{\max})$ can be packed into $S$. Since $n_{\max}$ is arbitrary, by sending $n_{\max}\to \infty$ and using a standard compactness argument (see e.g., \cite{Ma}),  we see that there is a perfectly packing of squares of sidelength $f(n)^{-t}$ for $n \geq n_{0}$ into $S$, under the assumptions of Theorem 1.1. Thus the proof of Theorem 1.1 is completed.  \hspace{12.2cm} $\Box$


\section{Perfectly packing a square by squares of nearly AP-harmonic sidelength}
In this section, we apply Theorem 1.1 to the problem of perfectly packing a square by squares of sidelength $(qn+r)^{-t}$ for $q>r\geq 0$, $1/2<t<1$. 
\begin{cor}
Let $1/2<t<1$ and $q> r\geq 0$ be fixed constants and put $\delta =1-t$. Suppose that $M$ and $N_{0}$ are positive integers satisfying
\[
M\geq \max\left\{4^{\frac{1}{1-t}}\left( \frac{2(2t-1)}{1-t-\delta t} \right)^{\frac{2}{t(1-t)}}, \left(50\left(\frac{11}{10}\right)^{t(2-t)} \right)^{\frac{2}{1-t}}    \right\},
\]
\begin{align*}
N_{0} \geq \max \biggl\{ &4752M^{4}, \; \frac{2r}{q((264M^{2})^{\frac{1}{t}}-2)}, \;  (10q(2q)^{t}M)^{\frac{1}{1-t}}, \; 2^{\frac{1}{1-t-\delta t}}\frac{q+r}{q},  \\
 &  \quad  q^{-1}\left(\frac{q(1-t-\delta t)}{(q+r)^{t+\delta t}} \right)^{\frac{1}{1-t-\delta t}}, \; \frac{1}{1-2^{-\frac{1}{2t-1}}},  \\
  & q^{-1}(2q(1-t-\delta t))^{\frac{2}{1-\delta}}\left(\frac{2}{q(2t-1)} \right)^{\frac{1+\delta}{1-\delta}}M^{\frac{2-\delta}{1-\delta}}, \; q^{-1}\left( \frac{2}{q(2t-1)} \right)^{\frac{1}{2t-1}}  \biggr\}.
\end{align*}
Then, for any positive integer $n_{0}\geq N_{0}$, squares of sidelength $(qn+r)^{-t}$ for $n \geq n_{0}$ can be packed perfectly into a square of area $\sum _{n=n_{0}}^{\infty}(qn+r)^{-2t}$. 
\end{cor}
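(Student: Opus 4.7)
The plan is to apply Theorem 1.1 with $f(n) = qn+r$ (smooth, positive, strictly increasing on $\mathbb{R}_{\geq 1}$) and the following choices, natural in view of the elementary antiderivative $\int (qx+r)^{-s}\,dx = (qx+r)^{1-s}/(q(1-s))$ for $s \neq 1$:
\[
\xi_1(t)=\xi_2(t)=\tfrac{1}{1-t-\delta t},\quad \eta_1(t)=\eta_2(t)=\tfrac{1}{2t-1},
\]
\[
\nu_1(n)=\lambda_1(n)=\tfrac{qn+r}{q},\quad \nu_2(n)=\lambda_2(n)=\tfrac{2(qn+r)}{q},
\]
\[
c_1=1/2,\ c_2=d_1=d_2=1,\ c_{\lambda,\nu}=1/2,\ l=1/10,\ K=10/11.
\]
Each of the ten hypotheses will then reduce to an explicit lower bound on $n_0$ (or on $M$), and the corollary's $N_0$ is the maximum of the eight resulting bounds.

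The analytic heart is to verify (\ref{20}) and (\ref{21}) by integral comparison. Monotonicity of $(qx+r)^{-s}$ yields
\[
\int_1^{n_1}(qx+r)^{-s}\,dx \leq \sum_{n=1}^{n_1-1}(qn+r)^{-s} \leq (q+r)^{-s} + \int_1^{n_1-1}(qx+r)^{-s}\,dx
\]
for $s = t+\delta t$, and analogously for the tail $\sum_{n\geq n_1}(qn+r)^{-2t}$. Both integrals are elementary, and multiplying through by the appropriate power of $(qn_1+r)$ puts (\ref{20}) and (\ref{21}) in the required form, provided the lower-order error terms can be absorbed into the factor of $2$ on each upper side. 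The three thresholds $n_1\geq 2^{1/(1-t-\delta t)}(q+r)/q$, $n_1\geq q^{-1}(q(1-t-\delta t)/(q+r)^{t+\delta t})^{1/(1-t-\delta t)}$, and $n_1\geq 1/(1-2^{-1/(2t-1)})$ listed in the statement are precisely those demanded by these absorptions. Condition (\ref{25}) is then an equality: $\lambda_1(n_0) = (qn_0+r)/q = (1/2)\cdot 2(qn_0+r)/q = c_{\lambda,\nu}\,\nu_2(n_0)$.

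The remaining conditions are elementary algebra. Condition (\ref{16}) holds for every $n_0\geq 1$ with the chosen $K,l$, since $f(n_0)/f(11n_0/10)=(10qn_0+10r)/(11qn_0+10r)\geq 10/11$ whenever $r\geq 0$, and $10/11>\sqrt{2/3}$. Because $f'\equiv q$ and $f(n_0+9M^2)=f(n_0)+9qM^2$, conditions (\ref{15}), (\ref{15.5}), (\ref{31}), (\ref{34}) reduce to direct inequalities in $n_0$, yielding the bounds $n_0\geq 4752\,M^4$, $n_0\geq 2r/(q((264M^2)^{1/t}-2))$, $n_0\geq (10q(2q)^tM)^{1/(1-t)}$, and $n_0\geq q^{-1}(2/(q(2t-1)))^{1/(2t-1)}$ respectively. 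Condition (\ref{M}) reduces under the chosen parameters to the expression displayed in the corollary, since $c_2\xi_2/(c_{\lambda,\nu}d_1\eta_1)=2(2t-1)/(1-t-\delta t)$ and $K^{-t(2-t)}=(11/10)^{t(2-t)}$.

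The bottleneck is (\ref{33}). Substituting the chosen $\nu_1(n_0)=(qn_0+r)/q$ and $\lambda_2(n_0)=2(qn_0+r)/q$, it becomes
\[
\bigl((qn_0+r)/q\bigr)^{(1-\delta)/2} \geq 2^{(3+\delta)/2}(1-t-\delta t)(2t-1)^{-(1+\delta)/2}M^{1-\delta/2},
\]
which after raising to the power $2/(1-\delta)$ and regrouping the powers of $q$ and $2$ is exactly the last entry of the $\max$ defining $N_0$, namely $q^{-1}(2q(1-t-\delta t))^{2/(1-\delta)}(2/(q(2t-1)))^{(1+\delta)/(1-\delta)}M^{(2-\delta)/(1-\delta)}$. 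This is the tightest of the ten conditions because the exponent $(2-\delta)/(1-\delta)$ on $M$ is super-linear and controls the degradation of the construction as $t\to 1$ (equivalently $\delta\to 0$); the other bounds on $n_0$ grow only polynomially in $M$. With all ten conditions verified for every $n_0\geq N_0$, Theorem 1.1 applies and gives the corollary.
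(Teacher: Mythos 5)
Your proposal is correct and follows essentially the same route as the paper: apply Theorem 1.1 with $f(n)=qn+r$, $l=1/10$, $K=10/11$, verify (\ref{20}) and (\ref{21}) by integral comparison, and convert each remaining hypothesis into one entry of the stated maximum for $N_{0}$ (and the stated bound for $M$). The only difference is cosmetic --- you fold the factors $1/q$ and $2$ into $\nu_{i},\lambda_{i}$ and $c_{\lambda,\nu}$ rather than into $\xi_{i},\eta_{i}$ and $c_{2},d_{2}$ as the paper does --- and since the ten conditions involve only the same products and ratios, every resulting inequality (including the reduction of (\ref{M}) and the threshold coming from (\ref{33})) coincides with the paper's.
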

\begin{rem}[Effective lower bound for $N_{0}$ in the case of nearly harmonic sidelength]
In the case of $f(n)=n$ (hence $q=1, r=0$), a specific lower bound for $N_{0}$ is given by the following table.

\begin{table}[htbp]
\centering
\caption{Lower bound for $N_{0}$ in the case $f(n)=n$}
\begin{tabular}{|c||c|c|c|c|c|c|c|c|} \hline
$t$ &0.55  &0.6 &0.7  & 0.8 &0.9 &0.95  & 0.99 &0.999 \\ \hline
$\log _{10}N_{0}$ &35 &38 & 50 & 114 & 563 &2673 &92863 & 13216295  \\ \hline
\end{tabular}
\end{table}
\noindent
As $t\nearrow 1$, a sufficient condition for the value of $N_{0}$ is
\[
N_{0}\geq 4^{\frac{1}{(1-t)^{2}}}(2^{t}\cdot 10)^{\frac{1}{1-t}}\left( \frac{2(2t-1)}{(1-t)^{2}} \right)^{\frac{2}{t(1-t)^{2}}}.
\]
Roughly speaking, as $t\nearrow 1$, the perfectly packing is possible if $N_{0}\geq \left(3/(1-t)^{2}\right)^{3/(1-t)^{2}}$. 
\end{rem}

\begin{proof}
All we have to do is to identify when the function $f(x)=qx+r$ satisfies the whole conditions of  Theorem 1.1. The condition (\ref{15}) is
\[
q\leq \frac{1}{4752M^{4}}(qn_{0}+r),
\]
which is satisfied if
\begin{equation}
\label{A1}
n_{0}\geq 4752 M^{4}.
\end{equation}
The condition (\ref{15.5}) is
\[
(q(n_{0}+9M^{2})+r) \leq (264M^{2})^{\frac{1}{t}}(qn_{0}+r)
\]
If $n_{0}$ satisfies (\ref{A1}), then $n_{0}+9M^{2}\leq 2n_{0}$. Therefore, the left hand side of the above inequality is at most $2(qn_{0}+r)$. Hence a sufficient condition for (\ref{15.5}) is
\[
2(qn_{0}+r)\leq (264M^{2})^{\frac{1}{t}}(qn_{0}+r),
\]
which is satisfied when
\begin{equation}
\label{A2}
n_{0}\geq \frac{2r}{q((264M^{2})^{\frac{1}{t}}-2)}.
\end{equation}
The condition (\ref{16}) is 
\[
qn_{0}+r \geq K(q(1+l)n_{0}+r),
\]
which holds when
\begin{equation}
\label{A3}
K\leq \frac{1}{1+l}.
\end{equation}
We choose $l=1/10, \; K=10/11$. Then $K\geq (2/3)^{\frac{1}{2}}$ is also satisfied. The condition (\ref{31}) is
\[
9M(qn_{0}+r)^{t}\leq ln_{0}.
\]
The left hand side is at most $9(2q)^{t}Mn_{0}^{t}$. Therefore, (\ref{31}) holds for  
\begin{equation}
\label{A4}
n_{0}\geq \left( \frac{9(2q)^{t}M}{l}  \right)^{\frac{1}{1-t}}.
\end{equation}
Next, we consider the conditions (\ref{20}), (\ref{21}). Evaluating the summation by integrals, we have
\begin{equation}
\label{int1}
\int _{1}^{n_{1}}\frac{dx}{(qx+r)^{t+\delta t}} \leq \sum_{n=1}^{n_{1}-1}\frac{1}{(qn+r)^{t+\delta t}}\leq \frac{1}{(q+r)^{t+\delta t}}+ \int _{1}^{n_{1}-1}\frac{dx}{(qx+r)^{t+\delta t}}.
\end{equation}
The left hand side equals 
\[
\frac{1}{q(1-t-\delta t)}\frac{qn_{1}+r}{(qn_{1}+r)^{t+\delta t}}-\frac{1}{q(1-t-\delta t)}\frac{q+r}{(q+r)^{t+\delta t}},
\]
which is equal to or larger than
\[
\frac{1}{2}\frac{1}{q(1-t-\delta t)}\frac{qn_{1}+r}{(qn_{1}+r)^{t+\delta t}}
\]
if $n_{1}$ satisfies 
\begin{equation}
\label{A5}
n_{1}\geq 2^{\frac{1}{1-t-\delta t}}\frac{q+r}{q}.
\end{equation}
Therefore, for $n_{1}$ satisfying the condition (\ref{A5}), the former inequality of (\ref{20}) holds with 
\[
c_{1}=\frac{1}{2}, \quad \xi _{1}(t)=\frac{1}{q(1-t-\delta t)}, \quad \nu _{1}(n_{1})=qn_{1}+r.
\]
On the other hand, the right hand side of (\ref{int1}) is at most
\[
\frac{1}{(q+r)^{t+\delta t}}+\frac{1}{q(1-t-\delta t)}\frac{qn_{1}+r}{(qn_{1}+r)^{t+\delta t}}.
\]
This is equal to or less than 
\[
\frac{2}{q(1-t-\delta t)}\frac{qn_{1}+r}{(qn_{1}+r)^{t+\delta t}}
\]
if $n_{1}$ satisfies
\begin{equation}
\label{A6}
n_{1}\geq q^{-1}\left( \frac{q(1-t-\delta t)}{(q+r)^{t+\delta t}} \right)^{\frac{1}{1-t-\delta t}}.
\end{equation}
Therefore, for $n_{1}$ satisfying the condition (\ref{A6}), the latter inequality of (\ref{20}) holds with 
\[
c_{2}=2, \quad \xi _{2}(t)=\frac{1}{q(1-t-\delta t)}, \quad \nu _{2}(n_{1})=qn_{1}+r.
\]
Next, we consider (\ref{21}). We have
\[
\int _{n_{1}}^{\infty}\frac{dx}{(qx+r)^{2t}} \leq \sum _{n=n_{1}}^{\infty}\frac{1}{(qn+r)^{2t}} \leq \int _{n_{1}-1}^{\infty}\frac{dx}{(qx+r)^{2t}}.
\]
The left hand side equals
\[
\frac{qn_{1}+r}{q(2t-1)(qn_{1}+r)^{2t}}.
\]
Hence the former inequality of (\ref{21}) holds with 
\[
d_{1}=1, \quad \eta _{1}(t)=\frac{1}{q(2t-1)}, \quad  \lambda _{1}(n_{1})=qn_{1}+r.
\]
The right hand side is 
\[
\frac{1}{q(2t-1)}\frac{q(n_{1}-1)+r}{(q(n_{1}-1)+r)^{2t}},
\]
which becomes smaller than
\[
\frac{2}{q(2t-1)}\frac{qn_{1}+r}{(qn_{1}+r)^{2t}}
\]
when
\begin{equation}
\label{A7}
n_{1}\geq \frac{1}{1-2^{-\frac{1}{2t-1}}}.
\end{equation}
Therefore, if $n_{1}$ satisfies the condition (\ref{A7}), then the latter inequality of (\ref{21}) holds with
\[
d_{2}=2, \quad \eta _{2}(t)=\frac{1}{q(2t-1)}, \quad \lambda _{2}(n_{1})=qn_{1}+r.
\]
We assume (\ref{A5}), (\ref{A6}) and (\ref{A7}). Then, (\ref{25}) holds with $c_{\lambda ,\nu }=1$. The condition (\ref{33}) is
\[
(qn_{0}+r)^{1-\frac{1+\delta}{2}}\geq 2q(1-t-\delta t) \left(\frac{2}{q(2t-1)} \right)^{\frac{1+\delta}{2}}M^{1-\frac{\delta}{2}},
\]
which holds for 
\begin{equation}
\label{A8}
n_{0}\geq q^{-1}(2q(1-t-\delta t))^{\frac{2}{1-\delta}}\left(\frac{2}{q(2t-1)} \right)^{\frac{1+\delta}{1-\delta}}M^{\frac{2-\delta}{1-\delta}}.
\end{equation}
Finally, the condition (\ref{34}) is
\[
qn_{0}+r \leq \frac{1}{2}q(2t-1)(qn_{0}+r)^{2t},
\]
which holds for 
\begin{equation}
\label{A9}
n_{0}\geq q^{-1}\left( \frac{2}{q(2t-1)} \right)^{\frac{1}{2t-1}}.
\end{equation}
We choose a positive integer $N_{0}$ that is larger than  the largest value of the right hand sides of (\ref{A1}), (\ref{A2}), (\ref{A4}),  (\ref{A5}) to  (\ref{A9}). Then for $1/2<t<1$ and for any $n_{0}\geq N_{0}$, the squares of sidelength $(qn+r)^{-t}$ for $n\geq n_{0}$  are packed perfectly into a square of area $\sum _{n=n_{0}}^{\infty}(qn+r)^{-2t}$.
\end{proof}

\section{Perfectly packing a square by squares of nearly P-harmonic sidelength}
In this section, for $1/2<t<1$, we give a perfectly packing a square by squares of sidelength $p_{n}^{-t}$ for $n\geq n_{0}$ for any $n_{0}\geq N_{0}$, where $p_{n}$ denotes the $n$th prime number and $N_{0}$ is a sufficiently large positive integer depending on $t$, which will be explicitly given in Corollary 4.6. Before the statement of the theorem, we prepare some basic facts on primes. First, it is known that the sequence $\{p_{n} \}$ satisfies
\begin{equation}
\label{LP}
p_{n}>n\log n
\end{equation}
for any positive integer $n$, and 
\begin{equation}
\label{UP}
p_{n}<n(\log n +\log \log n)
\end{equation}
for any $n\geq 6$ (see for example  \cite{R}). We first give an upper bound for $\sum _{n \leq x}1/p_{n}^{t}$ for any fixed $1/2<t<1$.  By decomposing the interval $[1, x]$ into $[1, \sqrt[4]{x^{3}}]\cup (\sqrt[4]{x^{3}},x]$ and using (\ref{LP}), we have
\begin{equation}
\label{P1}
\begin{aligned}
\sum _{n\leq x}\frac{1}{p_{n}^{t}} \leq & \frac{1}{2^{t}}+\frac{1}{(\log 2)^{t}}\sum _{2\leq n \leq \sqrt[4]{x^{3}}}\frac{1}{n^{t}}+\frac{1}{(\log \sqrt[4]{x^{3}})^{t}}\sum _{\sqrt[4]{x^{3}}<n \leq x} \frac{1}{n^{t}} \\
\leq &  \frac{1}{2^{t}} +\frac{1}{(\log 2)^{t}}\int _{1}^{ \sqrt[4]{x^{3}}}\frac{du}{u^{t}}+\frac{\left( \frac{4}{3} \right)^{t}}{(\log x)^{t}} \int _{2}^{x}\frac{du}{u^{t}} \\
\leq & \frac{1}{2^{t}} +\frac{x^{\frac{3}{4}(1-t)}}{(1-t)(\log 2)^{t}}+\frac{\left(\frac{4}{3} \right)^{t} x^{1-t}}{(1-t)(\log x)^{t}}.
\end{aligned}
\end{equation}
In the right hand side of  (\ref{P1}), the inequality
\[
\max \left\{ \frac{1}{2^{t}}, \;  \frac{x^{\frac{3}{4}(1-t)}}{(1-t)(\log 2)^{t}} \right\}    \leq \frac{ x^{1-t}}{(1-t)(\log x)^{t}}
\]
holds for 
\begin{equation}
\label{P3}
x \geq e^{\frac{16}{(1-t)^{2}}}.
\end{equation}
Furthermore,
\[
\frac{\left(\frac{4}{3} \right)^{t} x^{1-t}}{(1-t)(\log x)^{t}} \leq \frac{4}{3}\frac{ x^{1-t}}{(1-t)(\log x)^{t}}
\]
for any $x>1$. Therefore, by (\ref{P1}) and the above estimates, we have
\begin{equation}
\label{P4}
\sum _{n\leq x}\frac{1}{p_{n}^{t}} \leq \frac{10}{3} \frac{ x^{1-t}}{(1-t)(\log x)^{t}}
\end{equation}
for $x$ satisfying (\ref{P3}). On the other hand, since $1/(1+(\log \log n/\log n)) >7/10$ for any $n \geq 6$, by (\ref{UP}) we have
\begin{equation}
\label{P2}
\begin{aligned}
\sum _{n\leq x}\frac{1}{p_{n}^{t}} &>\frac{7}{10}\sum _{6\leq n \leq x}\frac{1}{(n\log n)^{t}}  \geq \frac{7}{10}\frac{1}{(\log x)^{t}}\sum _{6\leq n \leq x}\frac{1}{n^{t}}  \geq \frac{7}{10}\frac{1}{(\log x)^{t}}\int _{6}^{x}\frac{du}{u^{t}} \\
& \geq \frac{7(x^{1-t}-6^{1-t})}{10(1-t)(\log x)^{t}}
\end{aligned}
\end{equation}
for $x \geq 6$. If $x$ satisfies 
\begin{equation}
\label{P4.5}
x\geq 6\cdot 2^{\frac{1}{1-t}},
\end{equation}
then $x^{1-t}-6^{1-t}\geq  x^{1-t}/2$. Therefore, for $x$ satisfying the condition (\ref{P4.5}), we have 
\begin{equation}
\label{P5}
\sum _{n\leq x}\frac{1}{p_{n}^{t}}\geq \frac{7}{20}\frac{x^{1-t}}{(1-t)(\log x)^{t}}.
\end{equation}
Finally, we note that $e^{\frac{16}{(1-t)^{2}}}>6\cdot 2^{\frac{1}{1-t}}$ holds for any $1/2<t<1$, we have the following conclusion.
\begin{lem}
For any $1/2<t<1$ and $x\geq e^{\frac{16}{(1-t)^{2}}}$, we have
\begin{equation}
\label{P6}
\frac{7}{20}\frac{x^{1-t}}{(1-t)(\log x)^{t}}<  \sum _{n\leq x}\frac{1}{p_{n}^{t}} < \frac{10}{3}\frac{x^{1-t}}{(1-t)(\log x)^{t}}. 
\end{equation}
\end{lem}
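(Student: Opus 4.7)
The statement to prove is Lemma 4.1. Since the derivation in the text has already produced the two one-sided bounds (\ref{P4}) and (\ref{P5}) with explicit constants $10/3$ and $7/20$ respectively, my plan is simply to verify that the single hypothesis $x \geq e^{16/(1-t)^2}$ simultaneously activates both bounds, and to isolate the clean analytic estimates that underlie them.

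For the upper bound, my approach follows the split of the range $[1,x]$ into $[1, x^{3/4}]$ and $(x^{3/4}, x]$. On the short range I would use only the crude lower bound $p_{n}>n\log n$ (equivalently $p_n \geq 2$), since the contribution is dominated by an integral $\int_{1}^{x^{3/4}}u^{-t}\,du$, which has the form $x^{3(1-t)/4}/(1-t)$ up to constants. On the long range $n > x^{3/4}$ I would instead use $p_n > n\log n > n \cdot \frac{3}{4}\log x$ to pull a factor $(\log x)^{-t}$ out of the sum, leaving the larger integral $\int_{2}^{x}u^{-t}\,du$. The key analytic observation is that the short-range contribution and the constant $1/2^t$ both become negligible compared to the long-range main term $x^{1-t}/((1-t)(\log x)^t)$ once $x$ is large enough; solving the resulting inequality yields the explicit threshold $x \geq e^{16/(1-t)^{2}}$ in (\ref{P3}), which absorbs all lower order terms into the single constant $10/3$.

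For the lower bound, the plan is to use Rosser's sharper upper bound $p_n < n(\log n + \log \log n)$ (valid for $n \geq 6$), factor out $\log n$, and observe that the ratio $\log n / (\log n + \log \log n)$ exceeds $7/10$ for $n \geq 6$. Replacing the sum by the integral $\int_{6}^{x} u^{-t}\,du$ via monotonicity gives $(x^{1-t}-6^{1-t})/((1-t)(\log x)^t)$; choosing $x \geq 6 \cdot 2^{1/(1-t)}$, as in (\ref{P4.5}), ensures $6^{1-t} \leq x^{1-t}/2$ and collapses the constant to $7/20$.

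Finally, the lemma follows once I confirm the single inequality $e^{16/(1-t)^2} \geq 6 \cdot 2^{1/(1-t)}$ for all $t \in (1/2, 1)$. Writing $u = 1/(1-t) > 2$, this reduces to checking $16u^{2} \geq \log 6 + u \log 2$, which is evident since $16u^2 > 2 \log 6$ and $16u^2 > u \log 2$ for $u > 2$. The one potential obstacle is bookkeeping: several intermediate inequalities each carry their own threshold on $x$, and I need to be careful that the single bound $e^{16/(1-t)^2}$ dominates them all as $t \nearrow 1$. But once this comparison is in hand, the proof is just a citation: the upper bound is (\ref{P4}) and the lower bound is (\ref{P5}).
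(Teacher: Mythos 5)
Your proposal is correct and follows essentially the paper's own route: the two one-sided estimates are exactly (\ref{P4}) and (\ref{P5}), derived by the same split at $x^{3/4}$ with $p_n>n\log n$ for the upper bound and by $p_n<n(\log n+\log\log n)$ with the $7/10$ ratio for the lower bound, and the lemma then follows, as in the paper, from the single comparison $e^{16/(1-t)^{2}}\geq 6\cdot 2^{1/(1-t)}$ on $(1/2,1)$.
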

Replacing $x$ with $x-1$ in (\ref{P6}), we see that
\[
\frac{7}{20}\frac{(x-1)^{1-t}}{(1-t)(\log (x-1))^{t}}<  \sum _{n\leq x-1}\frac{1}{p_{n}^{t}} < \frac{10}{3}\frac{(x-1)^{1-t}}{(1-t)(\log (x-1))^{t}}
\]
for $x\geq 1+e^{\frac{16}{(1-t)^{2}}}$.  In this region it follows that 
\[
\frac{1}{2}\frac{x^{1-t}}{(1-t)(\log x)^{t}}< \frac{(x-1)^{1-t}}{(1-t)(\log (x-1))^{t}} <\frac{2x^{1-t}}{(1-t)(\log x)^{t}}.
\]
Combining (\ref{P6}) and the above inequality, we have the following result.
\begin{lem}
For $1/2<t<1$ and $x\geq 2e^{\frac{16}{(1-t)^{2}}}$, we have
\begin{equation}
\label{P10}
\frac{7}{40} \frac{x^{1-t}}{(1-t)(\log x)^{t}} < \sum _{n\leq x-1}\frac{1}{p_{n}^{t}} < \frac{20}{3}  \frac{x^{1-t}}{(1-t)(\log x)^{t}}.
\end{equation}
\end{lem}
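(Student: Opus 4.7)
The strategy is to reduce Lemma 4.2 directly to Lemma 4.1 by the substitution $x\mapsto x-1$ together with a crude comparison of $(x-1)^{1-t}/(\log(x-1))^t$ with $x^{1-t}/(\log x)^t$. Since $x \geq 2e^{16/(1-t)^2}$ implies $x-1 \geq e^{16/(1-t)^2}$, applying (\ref{P6}) to $x-1$ in place of $x$ yields
\[
\frac{7}{20}\frac{(x-1)^{1-t}}{(1-t)(\log(x-1))^{t}} < \sum_{n \leq x-1}\frac{1}{p_n^{t}} < \frac{10}{3}\frac{(x-1)^{1-t}}{(1-t)(\log(x-1))^{t}},
\]
so the remaining task is to show that the ratio $(x-1)^{1-t}(\log x)^t/\bigl(x^{1-t}(\log(x-1))^t\bigr)$ lies in the interval $(1/2, 2)$ throughout the prescribed range.

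To that end I would split the ratio as $(1-1/x)^{1-t}\cdot (\log x/\log(x-1))^{t}$ and bound the two factors separately. The first lies between $(1/2)^{1-t}$ and $1$, hence between $1/2$ and $1$ for $x\geq 2$. For the second, the identity $\log x = \log(x-1) + \log(1+1/(x-1))$ and $\log(1+u) \leq u$ give
\[
\frac{\log x}{\log(x-1)} \leq 1 + \frac{1}{(x-1)\log(x-1)},
\]
which is a negligible perturbation of $1$ at the prescribed size of $x$, and raising to the $t$-th power preserves this. Multiplying the two factor bounds produces
\[
\frac{1}{2}\frac{x^{1-t}}{(1-t)(\log x)^{t}} < \frac{(x-1)^{1-t}}{(1-t)(\log(x-1))^{t}} < 2\cdot \frac{x^{1-t}}{(1-t)(\log x)^{t}},
\]
and substituting this two-sided bound into the previous display yields the claim with constants $(7/20)(1/2) = 7/40$ and $(10/3)(2) = 20/3$.

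The only point requiring even mild care is uniformity in $t\in (1/2, 1)$: as $t\nearrow 1$, the logarithmic factor $(\log x/\log(x-1))^{t}$ becomes the sensitive one, but the double-exponential threshold $e^{16/(1-t)^{2}}$ forces $\log(x-1)$ to be enormous, so this factor remains arbitrarily close to $1$ uniformly. No genuine obstacle presents itself; once the strategy is identified, the verification is a tidy bookkeeping exercise.
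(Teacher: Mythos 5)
Your proposal is correct and follows essentially the same route as the paper: apply Lemma 4.1 with $x-1$ in place of $x$ (legitimate since $x\geq 2e^{16/(1-t)^{2}}$ gives $x-1\geq e^{16/(1-t)^{2}}$), then absorb the comparison $\frac{1}{2}\frac{x^{1-t}}{(\log x)^{t}}<\frac{(x-1)^{1-t}}{(\log (x-1))^{t}}<\frac{2x^{1-t}}{(\log x)^{t}}$ into the constants, turning $7/20$ and $10/3$ into $7/40$ and $20/3$. The paper states the factor-of-two comparison without proof, and your factorization into $(1-1/x)^{1-t}$ and $(\log x/\log(x-1))^{t}$ supplies exactly the missing verification.
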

Next, we evaluate $\sum _{n \geq x}1/p_{n}^{t}$ for $t>1$. By (\ref{LP}), we have
\[
\sum _{n\geq x}\frac{1}{p_{n}^{t}}<\frac{1}{(\log x)^{t}}\sum _{n\geq x}\frac{1}{n^{t}}\leq \frac{1}{(\log x)^{t}}\int _{x-1}^{\infty}\frac{du}{u^{t}}=\frac{(x-1)^{1-t}}{(t-1)(\log x)^{t}}.
\]
If $x\geq 1/(1-2^{-\frac{1}{t-1}})$, then $(x-1)^{1-t}\leq 2x^{1-t}$. Therefore, for this $x$ we have
\begin{equation}
\label{P8}
\sum _{n\geq x}\frac{1}{p_{n}^{t}}\leq \frac{2x^{1-t}}{(t-1)(\log x)^{t}}.
\end{equation}
On the other hand, by (\ref{UP}), for $x\geq 6$ we have
\begin{align*}
\sum _{n\geq x}\frac{1}{p_{n}^{t}}&\geq \sum _{n\geq x}\frac{1}{(n\log n)^{t}\left(1+\frac{\log \log n}{\log n} \right)^{t}} \geq \frac{1}{2^{t}}\sum _{n\geq x}\frac{1}{(n\log n)^{t}}  \geq \frac{1}{2^{t}}\frac{1}{(\log 2x)^{t}}\sum _{x\leq n <2x}\frac{1}{n^{t}} \\
& \geq \frac{1}{(2\log 2x)^{t}}\int _{x}^{2x}\frac{du}{u^{t}}  = \frac{(1-2^{1-t})x^{1-t}}{(t-1)(2\log 2x)^{t}}  \geq \frac{(1-2^{1-t})x^{1-t}}{2^{2t}(t-1)(\log x)^{t}}.
\end{align*}
Consequently,
\begin{lem}
For $t>1$ and $x\geq \max \{1/(1-2^{-\frac{1}{t-1}}), 6 \}$, we have
\begin{equation}
\label{P9}
\frac{(1-2^{1-t})x^{1-t}}{2^{2t}(t-1)(\log x)^{t}}\leq \sum _{n\geq x}\frac{1}{p_{n}^{t}}\leq  \frac{2x^{1-t}}{(t-1)(\log x)^{t}}.
\end{equation}
\end{lem}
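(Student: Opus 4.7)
The plan is to prove the two bounds separately, in each case using one of the effective prime estimates (\ref{LP}) or (\ref{UP}), reducing to a tail sum of $n^{-t}$, and then comparing that sum to an integral. The only subtleties are technical: I must choose thresholds on $x$ so that I can convert $(x-1)^{1-t}$ into $x^{1-t}$, and $\log 2x$ into $\log x$, at the cost of only a constant factor.

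For the upper bound, I would invoke $p_{n}>n\log n$ from (\ref{LP}). Since $\log n\geq \log x$ for $n\geq x$, this yields $\sum_{n\geq x}p_{n}^{-t}<(\log x)^{-t}\sum_{n\geq x}n^{-t}$. Then I would dominate the tail sum by the integral $\int_{x-1}^{\infty}u^{-t}\,du=(x-1)^{1-t}/(t-1)$. The only nontrivial step is passing from $(x-1)^{1-t}$ to $x^{1-t}$: imposing $x\geq 1/(1-2^{-1/(t-1)})$ guarantees $(x-1)^{1-t}\leq 2x^{1-t}$, which gives the desired upper bound with constant $2/(t-1)$.

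For the lower bound, I would write $p_{n}<n(\log n+\log\log n)=n\log n\bigl(1+\frac{\log\log n}{\log n}\bigr)$, valid for $n\geq 6$ by (\ref{UP}). Since $\log\log n\leq \log n$ on this range, $\bigl(1+\log\log n/\log n\bigr)^{t}\leq 2^{t}$, hence $p_{n}^{-t}\geq 2^{-t}(n\log n)^{-t}$. Restricting the tail sum to $x\leq n<2x$ allows me to bound $\log n$ uniformly by $\log 2x$, and the remaining sum $\sum_{x\leq n<2x}n^{-t}$ is dominated from below by $\int_{x}^{2x}u^{-t}\,du=(1-2^{1-t})x^{1-t}/(t-1)$. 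Finally, for $x\geq 2$ (which follows from $x\geq 6$) one has $\log 2x\leq 2\log x$, and therefore $(2\log 2x)^{t}\leq 2^{2t}(\log x)^{t}$, which produces the claimed lower bound.

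There is no real mathematical obstacle here; everything reduces to careful bookkeeping of constant factors and thresholds. The lemma statement then simply collects these two inequalities under the combined hypothesis $x\geq \max\{1/(1-2^{-1/(t-1)}),6\}$, which ensures both the $(x-1)^{1-t}\leq 2x^{1-t}$ conversion and the applicability of (\ref{UP}) to every term in the tail. The analogue of this argument in the range $1/2<t<1$ has already appeared above in Lemmas 4.1 and 4.2, so the present case $t>1$ is essentially structurally identical with the integrals now converging at infinity instead of being truncated at $x$.
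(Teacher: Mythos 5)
Your proof is correct and follows essentially the same route as the paper: the upper bound via $p_{n}>n\log n$, comparison with $\int_{x-1}^{\infty}u^{-t}\,du$, and the threshold $x\geq 1/(1-2^{-\frac{1}{t-1}})$ to replace $(x-1)^{1-t}$ by $2x^{1-t}$; the lower bound via $p_{n}<n\log n\bigl(1+\tfrac{\log\log n}{\log n}\bigr)$, truncation to $x\leq n<2x$, the integral $\int_{x}^{2x}u^{-t}\,du$, and $\log 2x\leq 2\log x$. No substantive differences from the paper's argument.
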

Finally, by (\ref{LP}) and (\ref{UP}), it follows that
\[
\frac{p_{n_{1}}}{2}<n_{1}\log n_{1}<p_{n_{1}}.
\]
Combining the above inequality and (\ref{P10}), (\ref{P9}), we obtain the following conclusion. 
\begin{lem}
We have
\begin{equation}
\label{P11}
\frac{7}{40}\frac{n_{1}}{(1-t)p_{n_{1}}^{t}}<\sum _{n\leq n_{1}-1}\frac{1}{p_{n}^{t}}<\frac{20}{3}\frac{2^{t}n_{1}}{(1-t)p_{n_{1}}^{t}}
\end{equation}
for $1/2<t<1$, $n_{1}\geq 2e^{\frac{16}{(1-t)^{2}}}$, and 
\begin{equation}
\label{P12}
\frac{(1-2^{1-t})n_{1}}{2^{2t}(t-1)p_{n_{1}}^{t}}<\sum _{n\geq n_{1}}\frac{1}{p_{n_{1}}^{t}}<\frac{2^{1+t}n_{1}}{(t-1)p_{n_{1}}^{t}}
\end{equation}
for $t>1$, $n_{1}\geq \max \{6, 1/(1-2^{-\frac{1}{t-1}}) \}$. 
\end{lem}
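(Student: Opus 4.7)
The plan is a one-line reduction. Lemma 4.2 and Lemma 4.3 already supply two-sided bounds on the relevant sums in terms of the shape $n_{1}^{1-t}/(\log n_{1})^{t}$ (after specializing $x=n_{1}$), and Lemma 4.4 just converts this shape into $n_{1}/p_{n_{1}}^{t}$ using the inequality $p_{n_{1}}/2<n_{1}\log n_{1}<p_{n_{1}}$ that the paragraph preceding the statement already asserts.

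First I would verify the pre-lemma inequality in passing: $n_{1}\log n_{1}<p_{n_{1}}$ is (\ref{LP}), and $p_{n_{1}}/2<n_{1}\log n_{1}$ follows from (\ref{UP}) via $p_{n_{1}}<n_{1}(\log n_{1}+\log\log n_{1})\leq 2n_{1}\log n_{1}$, the last step using $\log\log n_{1}\leq\log n_{1}$ for $n_{1}\geq 6$. Combining these with the identity $n_{1}^{1-t}/(\log n_{1})^{t}=n_{1}/(n_{1}\log n_{1})^{t}$ gives the sandwich
\[
\frac{n_{1}}{p_{n_{1}}^{t}}\;<\;\frac{n_{1}^{1-t}}{(\log n_{1})^{t}}\;<\;\frac{2^{t}n_{1}}{p_{n_{1}}^{t}}.
\]
Inserting this sandwich into (\ref{P10}) (at $x=n_{1}$) produces (\ref{P11}): the lower half of the sandwich couples with the lower bound of (\ref{P10}) and the upper half with the upper bound, the numerical prefactors $\tfrac{7}{40}/(1-t)$ and $\tfrac{20}{3}/(1-t)$ carrying through unchanged while the upper bound picks up an extra factor of $2^{t}$. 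The very same substitution into (\ref{P9}) (at $x=n_{1}$) produces (\ref{P12}) in exactly the analogous way, with the lower bound keeping $(1-2^{1-t})/(2^{2t}(t-1))$ and the upper bound becoming $2\cdot 2^{t}/(t-1)=2^{1+t}/(t-1)$.

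Finally I would check that the hypotheses line up. For (\ref{P11}), the condition $n_{1}\geq 2e^{16/(1-t)^{2}}$ is precisely what Lemma 4.2 requires, and this already forces $n_{1}\geq 6$ so that (\ref{UP}) is available for the sandwich. For (\ref{P12}), the condition $n_{1}\geq\max\{6,\,1/(1-2^{-1/(t-1)})\}$ is the union of the Lemma 4.3 hypothesis and the $n_{1}\geq 6$ needed to invoke (\ref{UP}). There is no real obstacle: the whole argument is a mechanical substitution, and the only sub-step worth spelling out is the chain $p_{n_{1}}\leq 2n_{1}\log n_{1}$ coming from (\ref{UP}).
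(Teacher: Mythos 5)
Your proposal is correct and is essentially the paper's own argument: the paper likewise derives $p_{n_{1}}/2<n_{1}\log n_{1}<p_{n_{1}}$ from (\ref{LP}) and (\ref{UP}) and then substitutes this into (\ref{P10}) and (\ref{P9}) at $x=n_{1}$, which is exactly your sandwich-and-substitute step. The extra care you take with the direction of the sandwich and the hypothesis ranges matches what the paper leaves implicit.
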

The next purpose is to extend the sequence $\{p_{n} \}$ to a smooth increasing function on $\mathbb{R}_{\geq 1}$.
\begin{lem}
There exists a smooth increasing function $f: \mathbb{R}_{\geq 1}\to \mathbb{R}_{\geq 1}$ which satisfies the following two conditions. \\
1. For any positive integer $n$, $f(n)=p_{n}$. \\
2. There exists an absolute positive constant $C_{P}<7$ such that for any positive integer $n$ and for any $x\in [n, n+1]$, 
\begin{equation}
\label{A10}
f^{'}(x) \leq C_{p}(p_{n+1}-p_{n}).
\end{equation}
\begin{proof}
Define a function $\varphi$ by
\[\varphi (x)=
\begin{cases}
0 & (x<-\frac{1}{6}) \\
\frac{1}{2}\exp \left(1-\frac{1}{1-36x^{2}} \right) &(-\frac{1}{6}\leq x \leq 0 ) \\
1-\frac{1}{2}\exp \left(1-\frac{1}{1-36x^{2}} \right)  &(0\leq x \leq \frac{1}{6}) \\
1 & (x>\frac{1}{6}).
\end{cases}
\]
\
This is a smooth increasing function satisfying $\varphi (x)=0$ if $x<-1/6$,  $\varphi (x)=1$ if $x>1/6$ and $\max \varphi ^{'}(x)=6.511\ldots <7$. We define the function $f(x)$ by
\begin{equation}
\label{f}
f(x)=p_{1}+\sum _{k=1}^{\infty}\varphi (x-(k+1/2))(p_{k+1}-p_{k}).
\end{equation}
We fix a positive integer $n$ arbitrarily and suppose $x\in [n, n+1]$. If $k\geq n+1$, then
\[
x-\left(k+\frac{1}{2} \right)\leq (n+1)-(n+1)-\frac{1}{2}=-\frac{1}{2}<-\frac{1}{6}
\]
Hence $\varphi (x-(k+1/2))=0$. If $k\leq n-1$, then
\[
x-\left(k+\frac{1}{2}\right)\geq n-(n-1)-\frac{1}{2}=\frac{1}{2}>\frac{1}{6}.
\]
Hence $\varphi (x-(k+1/2))=1$. Therefore,
\begin{align*}
f(x)=&p_{1}+\sum _{k=1}^{n-1}(p_{k+1}-p_{k})+\varphi (x-(n+1/2))(p_{n+1}-p_{n}) \\
=& p_{n}+\varphi (x-(n+1/2))(p_{n+1}-p_{n}).
\end{align*}
Hence $f$ satisfies the conditions of the lemma. 
\end{proof}
\end{lem}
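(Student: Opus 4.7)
The plan is to build $f$ by telescoping: I will construct a smooth increasing function that transitions from $p_n$ to $p_{n+1}$ on each interval $[n,n+1]$ via a fixed smooth ramp localized near the midpoint $n+1/2$. Concretely, I would fix once and for all a smooth increasing function $\varphi : \mathbb{R} \to [0,1]$ satisfying $\varphi \equiv 0$ on $(-\infty, -\epsilon]$ and $\varphi \equiv 1$ on $[\epsilon, \infty)$ for some $0 < \epsilon < 1/2$; such a $\varphi$ is standard, produced by integrating and normalizing a compactly supported $C^\infty$ bump on $[-\epsilon, \epsilon]$. Then I would define
$$f(x) := p_1 + \sum_{k=1}^{\infty} \varphi\bigl(x - (k + \tfrac{1}{2})\bigr)\,(p_{k+1} - p_k).$$

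The verification is then almost mechanical. Fix $n \geq 1$ and $x \in [n, n+1]$. For $k \leq n-1$ one has $x - (k+1/2) \geq 1/2 > \epsilon$, hence that summand contributes $p_{k+1}-p_k$; for $k \geq n+1$ one has $x - (k+1/2) \leq -1/2 < -\epsilon$, hence that summand contributes $0$. Only the $k = n$ term depends on $x$, and the sum telescopes to
$$f(x) = p_n + \varphi\bigl(x - (n + \tfrac{1}{2})\bigr)(p_{n+1} - p_n).$$
From this I can read off: $f(n) = p_n$ (condition 1), monotonicity (since $\varphi$ is increasing and $p_{n+1} \geq p_n$), smoothness inside each $(n,n+1)$, and smooth matching at integer points (because $\varphi$ is eventually constant on both sides of its transition region, so all derivatives vanish there). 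Differentiating on $[n,n+1]$ yields $f'(x) = \varphi'(x - (n+1/2))(p_{n+1}-p_n)$, so condition (\ref{A10}) holds with the absolute constant $C_P := \sup_{\mathbb{R}} \varphi'$.

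The only non-routine step, and hence the main obstacle, is to produce an explicit $\varphi$ of the required shape with $\sup \varphi' < 7$. Since $\varphi$ must rise from $0$ to $1$ across an interval of length $2\epsilon$, the mean value theorem forces $\sup \varphi' \geq 1/(2\epsilon)$, so I cannot take $\epsilon$ arbitrarily small; but choosing $\epsilon = 1/6$ already leaves the lower bound at $3$, with plenty of headroom under $7$. I would then take $\varphi$ to be the symmetrization or integrated form of a standard bump built from $\exp\!\bigl(-1/(1-(6x)^2)\bigr)$ on $(-1/6, 1/6)$, and verify by a direct (essentially numerical) computation that the maximum slope of the resulting ramp lies strictly below $7$. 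Once this explicit bound on $\sup\varphi'$ is in hand, the two properties of the lemma follow immediately from the telescoping computation above.
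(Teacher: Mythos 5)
Your construction is exactly the paper's: the same telescoping sum $f(x)=p_{1}+\sum_{k\geq 1}\varphi (x-(k+1/2))(p_{k+1}-p_{k})$ with a smooth ramp supported in $[-1/6,1/6]$ built from $\exp (-1/(1-36x^{2}))$, the same localization argument showing only the $k=n$ term varies on $[n,n+1]$, and the same reduction of the derivative bound to $C_{P}=\sup \varphi ^{'}<7$, which the paper likewise settles by direct computation (obtaining $6.511\ldots$). The proposal is correct and essentially identical in approach.
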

Next, it is known that there exist some $0<\theta <1$ and a positive integer $N_{\theta}$ for which
\begin{equation}
\label{A11}
p_{n+1}-p_{n}\leq p_{n}^{\theta}
\end{equation}
holds for any $n\geq N_{\theta}$. The best result currently known is due to Baker, Harman and Pintz \cite{BHP}, according to which (\ref{A11}) holds for $\theta =0.525$. By (\ref{A10}) and (\ref{A11}), we have
\[
f^{'}(x)\leq C_{P}p_{n}^{\theta}
\]
for any $n\geq N_{\theta}$, $x \in [n, n+1]$. Combining this with $p_{n}<2n\log n$ $(n\geq 6)$, we have
\begin{equation}
\label{A11.5}
f^{'}(x)\leq 7(2\log n)^{\theta}
\end{equation}
for any $n\geq \max\{ 6, N_{\theta} \}$, $x \in [n, n+1]$. Now our theorem on perfectly packing a square by squares of sidelength $p_{n}^{-t}$ is described as follows.
\begin{cor}
Assume that  (\ref{A11}) holds for any $n\geq N_{\theta}$ with some positive constant $\theta <1$.  for $1/2<t<1$, let $M$ be a positive integer satisfying
\[
M\geq \max \left\{  4^{\frac{1}{1-t}} \left(\frac{20\cdot 2^{6t-t^{2}}(2t-1)}{3(1-t)^{2}(1-2^{1-2t})}\right)^{\frac{2}{t(1-t)}}, \left(50\left( \frac{6}{5} \right)^{t(2-t)} \right)^{\frac{2}{1-t}}   \right\},
\]
and $N_{0}$ be a positive integer satisfying 
\begin{align*}
N_{0}\geq & \max \biggl\{N_{\theta},\;  85536 M^{5},  \; 3\times 10^{18}, \; 2e^{\frac{16}{(1-t)^{4}}}, \;  \frac{1}{1-2^{-\frac{1}{2t-1}}}, \\
& \quad \quad\quad  M^{\frac{1}{(1-t)^{2}}}, \;  \left( \frac{40}{7}(1-t)^{2} \right)^{\frac{2}{t}}\left( \frac{2^{1+2t}}{2t-1} \right)^{\frac{2-t}{t}}M^{\frac{1+t}{t}}, \; e^{\frac{1}{(2t-1)(1-t)}} \biggr\}.
\end{align*}
Then, for any positive integer $n_{0}\geq N_{0}$, there exists a perfectly packing a square of area $\sum _{n=n_{0}}^{\infty}p_{n}^{-2t}$ by squares of sidelength $p_{n}^{-t}$ for $n \geq n_{0}$. 
\end{cor}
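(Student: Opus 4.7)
The plan is to apply Theorem 1.1 with $f$ the smooth extension of the prime sequence from Lemma 4.5, reading the structural constants $c_i, d_i, \xi_i, \eta_i, \nu_i, \lambda_i$ off Lemma 4.4, and then translating each of the ten hypotheses (\ref{16})--(\ref{33}) into an explicit inequality in $n_0$. The bound for $N_0$ in the statement will emerge as the maximum of the resulting thresholds.

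I would begin by choosing $l=1/5$ and $K=5/6$, so that $K\geq(2/3)^{1/2}$ and $K(1+l)=1$; condition (\ref{16}) then follows from $p_n/p_{\lfloor 6n/5\rfloor}\to 5/6$ once $n_0$ is large. Next, applying (\ref{P11}) with exponent $t+\delta t=t(2-t)$ (so that $1-t(2-t)=(1-t)^2$) gives $c_1=7/40$, $c_2=20/3$, $\xi_1(t)=(1-t)^{-2}$, $\xi_2(t)=2^{t(2-t)}(1-t)^{-2}$, $\nu_1(n)=\nu_2(n)=n$, valid for $n_0\geq 2e^{16/(1-t)^4}$; and applying (\ref{P12}) with exponent $2t$ yields $d_1=1$, $d_2=2$, $\eta_1(t)=(1-2^{1-2t})/(2^{4t}(2t-1))$, $\eta_2(t)=2^{2t}/(2t-1)$, $\lambda_1(n)=\lambda_2(n)=n$, valid for $n_0\geq 1/(1-2^{-1/(2t-1)})$. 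Condition (\ref{25}) then holds with $c_{\lambda,\nu}=1$, and plugging these choices into (\ref{M}) reproduces exactly the lower bound on $M$ stated in the corollary. The remaining arithmetic hypotheses (\ref{34}), (\ref{31}), (\ref{33}) reduce---via $p_n\geq n\log n$ and $p_n<2n\log n$---to elementary inequalities satisfied once $n_0$ exceeds, respectively, $e^{1/((2t-1)(1-t))}$, a threshold absorbed by $M^{1/(1-t)^2}$, and $((40/7)(1-t)^2)^{2/t}(2^{1+2t}/(2t-1))^{(2-t)/t}M^{(1+t)/t}$; these contribute the corresponding terms to the displayed expression for $N_0$.

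The hard part will be verifying the derivative condition (\ref{15}) and its consequence (\ref{15.5}). From Lemma 4.5 one has $f'(x)\leq 7(p_{n+1}-p_n)$ on $[n,n+1]$, and the Baker--Harman--Pintz estimate (\ref{A11}) with $\theta=0.525$ gives $p_{n+1}-p_n\leq p_n^\theta$ for $n\geq N_\theta$. Combined with $p_n<2n\log n$ and the preliminary restriction $n_0\geq 9M^5$ (so that $n_0+9M^2\leq 2n_0$), this yields $\max_{[n_0,n_0+9M^2]} f'(x)\leq 7(4n_0\log n_0)^\theta$, and the required inequality $7(4n_0\log n_0)^\theta\leq p_{n_0}/(4752M^4)$ becomes $n_0^{1-\theta}/(\log n_0)^{\theta+1}\gtrsim M^4$. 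Since $\theta<1$, this holds once $n_0\geq 85536M^5$, provided an absolute numerical floor is imposed---captured by the constant $3\times 10^{18}$---large enough to dominate the logarithmic factors and to ensure $n_0\geq N_\theta$. Condition (\ref{15.5}) reduces to $p_{n_0+9M^2}\leq (264M^2)^{1/t}p_{n_0}$, which follows from the same control $p_{n_0+9M^2}\leq 4n_0\log(2n_0)$. Taking $N_0$ to be the maximum of all the thresholds identified above and invoking Theorem 1.1 then yields the desired perfect packing.
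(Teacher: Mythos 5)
Your overall route is the paper's route: extend $\{p_n\}$ to the smooth $f$ of Lemma 4.5, read the constants $c_i,d_i,\xi_i,\eta_i,\nu_i,\lambda_i$ off Lemma 4.4 (your choices agree with the paper's up to the harmless regrouping $d_2\eta_2=2^{1+2t}/(2t-1)$), and check the ten hypotheses of Theorem 1.1 one by one, with the derivative condition (\ref{15}) handled via Lemma 4.5 and Baker--Harman--Pintz exactly as in the paper. However, there is a genuine flaw in your treatment of condition (\ref{16}). You take $l=1/5$, $K=5/6$, so $K(1+l)=1$, and claim (\ref{16}) follows from $p_{n}/p_{\lfloor 6n/5\rfloor}\to 5/6$. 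That limit is true, but the ratio approaches $5/6$ \emph{from below}: writing $p_m\sim m(\log m+\log\log m-1)$, one gets
\[
\tfrac{5}{6}\,f\!\left(\tfrac{6}{5}n_0\right)-p_{n_0}=n_0\bigl(\log\tfrac{6}{5}+o(1)\bigr)\longrightarrow +\infty ,
\]
so $f(n_0)\geq K f((1+l)n_0)$ \emph{fails} for all sufficiently large $n_0$ whenever $K(1+l)=1$; no choice of the floor $N_0$ rescues it, and Theorem 1.1 cannot be invoked. The point is that (\ref{16}) needs strict slack $K(1+l)<1$ to absorb the $\log(1+l)+\log\log$ terms coming from the upper bound $p_n<n(\log n+\log\log n)$. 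The paper takes $l=1/10$, $K=5/6$ (so $K(1+l)=11/12$), and it is precisely this step that produces the threshold $3\times 10^{18}$ in the statement (from $\tfrac{1}{12}\log n_0\gtrsim \tfrac{11}{12}(\log\tfrac{11}{10}+\log\log\tfrac{11}{10}n_0)$), not a numerical floor for (\ref{15}) as you suggest. Note also that the stated bound on $M$, with its factor $(50(6/5)^{t(2-t)})^{2/(1-t)}$, already encodes $K=5/6$, so the fix is simply to keep $K=5/6$ and shrink $l$ to $1/10$ (which only makes your verification of (\ref{31}) marginally harder and is still covered by $N_0\geq M^{1/(1-t)^2}$); the rest of your argument then goes through as in the paper.
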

\begin{rem}
Roughly speaking, the perfectly packing exists as $t\nearrow 1$ if $N_{0}\geq 2\exp \left( \frac{16}{(1-t)^{4}} \right)$. 
\end{rem}
\begin{proof}
From now on, we assume 
\begin{equation}
\label{A12}
N_{0}\geq N_{\theta}.
\end{equation}
By (\ref{A11.5}), it follows that
\[
\max _{x\in [n_{0}, n_{0}+9M^{2}]}f^{'}(x)\leq 8(2\log (n_{0}+9M^{2}))^{\theta}
\]
for any $n_{0}\geq N_{\theta}$, the condition (\ref{15}) is valid if
\[
 8(2\log (n_{0}+9M^{2}))^{\theta} \leq \frac{1}{4752}M^{-4}n_{0}\log n_{0} \quad (\forall n_{0}\geq N_{0}),
\]
which holds for 
\begin{equation}
\label{A13}
N_{0}\geq 2\cdot 4752 \cdot 9 M^{5}.
\end{equation}
Since $n\log n <f(n)<2n\log n$ for any $n\geq 6$, the condition (\ref{15.5}) is valid if 
\[
2(n_{0}+9M^{2})\log (n_{0}+9M^{2})\leq (264 M^{2})^{\frac{1}{t}}n_{0}\log n_{0}
\]
holds for any $n_{0}\geq N_{0}$. Moreover, if
\begin{equation}
\label{A14}
N_{0}\geq 9M^{2},
\end{equation}
then the left hand side is at most $4n_{0}\log 2n_{0}$. Hence under the assumption of (\ref{A14}), the condition (\ref{15.5}) is satisfied if
\[
4n_{0} \log n_{0}\leq  (264 M^{2})^{\frac{1}{t}}n_{0} \log n_{0},
\]
and this inequality holds whenever $M$ satisfies (\ref{M}). Consequently, if $M$ satisfies (\ref{M}) and $N_{0}$ satisfies (\ref{A14}), then the condition (\ref{15.5}) is also satisfied. The condition (\ref{16}) is valid if
\[
n_{0}\log n_{0}\geq K(1+l)n_{0}(\log (1+l)n_{0}+\log \log (1+l)n_{0})
\]
holds for any $n_{0}\geq N_{0}$, and by choosing
\[
l=\frac{1}{10}, \; K=\frac{5}{6},
\]
the above inequality holds for 
\begin{equation}
\label{A15}
N_{0}\geq 3\times 10^{18}.
\end{equation}
By replacing the parameter $t$ in (\ref{P11}) and (\ref{P12}) with $t+\delta t$ $(\delta :=1-t)$, $2t$, respectively, the conditions (\ref{20}), (\ref{21}) hold with
\[
c_{1}=\frac{7}{40},\quad  c_{2}=\frac{20}{3},\quad  \xi _{1}(t)=\frac{1}{(1-t)^{2}},\quad \xi _{2}(t)=\frac{2^{2t-t^{2}}}{(1-t)^{2}}, \quad   \nu _{1}(n_{1})=\nu _{2}(n_{1})=n_{1},
\]
\[d_{1}=d_{2}=1, \quad \eta _{1}(t)=\frac{1-2^{1-2t}}{2^{4t}(2t-1)}, \quad \eta _{2}(t)=\frac{2^{1+2t}}{2t-1}, \quad  \lambda _{1}(n_{1})=\lambda _{2}(n_{1})=n_{1}.
\]
(We used $1-t-\delta t=(1-t)^{2}$.) Hence (\ref{25}) holds with $c_{\lambda ,\nu}=1$. The condition (\ref{31}) holds if
\[
9Mn_{0}^{t}(\log n_{0}+\log \log n_{0})^{t}\leq \frac{1}{10}n_{0} \quad (\forall n_{0}\geq N_{0}),
\]
which is equivalent to
\[
n_{0}^{1-t}\geq 90M(\log n_{0}+\log \log n_{0})^{t} \quad (\forall n_{0}\geq N_{0}).
\]
Since the right hand side is at most $180M\log n_{0}$, a sufficient condition for (\ref{31}) is that
\[
n_{0}^{1-t}\geq 180M\log n_{0} \quad (\forall n_{0}\geq N_{0}).
\]
This condition is satisfied if
\begin{equation}
\label{A16}
N_{0}\geq M^{\frac{1}{(1-t)^{2}}}.
\end{equation}
The condition (\ref{33}) is
\[
n_{0}^{1-\frac{1+\delta}{2}}\geq \frac{\left( \frac{2^{1+2t}}{2t-1} \right)^{\frac{1+\delta}{2}}}{\frac{7}{40}\frac{1}{(1-t)^{2}}}M^{1-\frac{\delta}{2}} \quad (\forall n_{0}\geq N_{0}),
\]
which is satisfied if
\begin{equation}
\label{A17}
N_{0}\geq \left( \frac{40}{7}(1-t)^{2} \right)^{\frac{2}{t}}\left( \frac{2^{1+2t}}{2t-1} \right)^{\frac{2-t}{t}}M^{\frac{1+t}{t}}.
\end{equation}
Finally, the condition (\ref{34}) is satisfied if
\[
n_{0}\leq \frac{2t-1}{2^{1+2t}}(n_{0}\log n_{0})^{2t} \quad (\forall n_{0}\geq N_{0}),
\]
and this inequality holds for 
\begin{equation}
\label{A18}
N_{0}\geq e^{\frac{1}{(2t-1)(1-t)}}.
\end{equation}
Combining the lower bounds for $N_{0}$, we obtain the conclusion.
\end{proof}
\section{Ineffective version of the perfectly packing theorem and an application to the packing of squares of nearly TP-harmonic sidelength}
\subsection{Ineffective version of the perfectly packing theorem}
Clearly, one cannot adapt Theorem 1.1 when $f(x)$ is larger than $x^{1+\delta _{0}}$ for some constant $\delta _{0}>0$ up to constant multiples, because the condition (\ref{31}) fails. On the other hand, we have seen that one can adapt Theorem 1.1 to the function with  $f(n)=p_{n}$, where $p_{n}\sim n\log n$. Hence one might wonder that how large the function $f(x)$ can be. The following theorem partially answers this question.
\begin{thm}
Let $\kappa : \mathbb{R}_{>1}\to \mathbb{R}_{>1}$ be a smooth function for which $f(x):=x\kappa (x)$ becomes a smooth increasing function. Suppose that for any $\varepsilon >0$, there exists a positive integer $N_{\varepsilon}$ such that for any $n\geq N_{\varepsilon}$, 
\[
\max _{x \in [n, 2n]}\kappa ^{'}(x) \leq \varepsilon \kappa (x),
\]
\[
(1-\varepsilon )\kappa (M(2n))\leq \kappa (n) \leq n^{\varepsilon},
\]
\[
c_{\mathfrak{m}}\kappa (n)\leq \kappa (\mathfrak{m}(n))\leq C_{\mathfrak{m}}\kappa (n),
\]
where $M(2n)$ denotes one of the values $x\in [1,2n]$ such that $\kappa (x)$ takes its minimum in $[1,2n]$,  $\mathfrak{m}(n)$ denotes one of the values $x\in [n, \infty)$ such that $\kappa (x)$ takes its minimum in $[n, \infty )$, and $C_{\mathfrak{m}}>c_{\mathfrak{m}}>0$ are positive constants. Moreover, we assume that there exist constants $0<\alpha <1$ and $C_{\alpha}>1$ such that 
\[
\kappa (n)\leq C_{\alpha}\kappa (m_{\alpha}(n))
\]
holds for any $n\geq N_{\varepsilon}$, where $m_{\alpha}(n)$ denotes one of the values $x\in [n, \infty )$ such that $\kappa (x)$ takes its minimum in $[n, \infty )$. Then, for any  $1/2<t<1$, there exists a positive integer $N_{0}=N_{0}(t)$ such that for any integer $n_{0}\geq N_{0}$,  squares of sidelength $(n\kappa (n))^{-t}$ for $n \geq n_{0}$ can be packed perfectly into a square of area $\sum _{n=n_{0}}^{\infty}(n \kappa (n))^{-2t}$. 
\end{thm}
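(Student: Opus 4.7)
The approach is to verify each of the ten hypotheses $(\ref{16})$--$(\ref{33})$ of Theorem 1.1 for $f(x):=x\kappa(x)$, choosing auxiliary functions and constants so that every one reduces to an inequality that holds once $n_0$ is sufficiently large. Since the assumptions on $\kappa$ are of the form ``for every $\varepsilon>0$ there exists $N_\varepsilon$'', I would first fix a small $\varepsilon=\varepsilon(t)$ (in particular $\varepsilon<(1-t)/(2t)$), take $N_0\geq N_\varepsilon$, and use throughout the bound $f'(x)=\kappa(x)+x\kappa'(x)\leq(1+\varepsilon x)\kappa(x)$ on intervals $[n,2n]$ together with the envelope $\kappa(n)\leq n^\varepsilon$ to absorb polynomial losses.

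The soft hypotheses are almost immediate. For $(\ref{16})$, the slow variation hypothesis gives $f((1+l)n_0)\leq(1+l)(1+O(\varepsilon))f(n_0)$, so taking $l$ small and $K=(2/3)^{1/2}$ plus a buffer works. Condition $(\ref{15})$ follows from $f'(x)=O(\varepsilon n_0\kappa(n_0))$ on $[n_0,n_0+9M^2]\subset[n_0,2n_0]$, which dominates $(4752M^4)^{-1}f(n_0)$ once $\varepsilon$ is small. Condition $(\ref{15.5})$ holds since $\kappa(n_0+9M^2)=(1+O(\varepsilon))\kappa(n_0)$ and $n_0+9M^2\leq 2n_0$. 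Condition $(\ref{31})$ rearranges to $9M\kappa(n_0)^t\leq ln_0^{1-t}$, which is valid for large $n_0$ because $\kappa(n_0)^t\leq n_0^{\varepsilon t}$ with $\varepsilon t<1-t$. The value $M=M(t)$ is then fixed large enough to satisfy $(\ref{M})$ once the constants from the sum estimates below are known.

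The core step is the pair $(\ref{20})$--$(\ref{21})$. I would compare
\[
\sum_{n<n_1}\frac{1}{(n\kappa(n))^{t+\delta t}}\qquad\text{and}\qquad\sum_{n\geq n_1}\frac{1}{(n\kappa(n))^{2t}}
\]
with the corresponding integrals, splitting the range near a point at which $\kappa$ attains its minimum on $[1,2n_1]$ (resp.\ on $[n_1,\infty)$) and applying the comparability hypotheses $(1-\varepsilon)\kappa(M(2n))\leq\kappa(n)$ and $c_\mathfrak{m}\kappa(n)\leq\kappa(\mathfrak{m}(n))\leq C_\mathfrak{m}\kappa(n)$ to pull the $\kappa$--factors outside the integral up to absolute constants. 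The outcome is that the two sums are comparable to
\[
\frac{n_1}{(1-t-\delta t)\,f(n_1)^{t+\delta t}}\qquad\text{and}\qquad\frac{n_1}{(2t-1)\,f(n_1)^{2t}}
\]
respectively, from which one reads off $\nu_i(n_1)\asymp n_1$, $\lambda_i(n_1)\asymp n_1$, $\xi_i(t)=1/(1-t-\delta t)$, $\eta_i(t)=1/(2t-1)$. With this dictionary, $(\ref{25})$ becomes a comparison of two quantities each $\asymp n_0$ and is trivially satisfied for a suitable absolute $c_{\lambda,\nu}>0$.

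Conditions $(\ref{34})$ and $(\ref{33})$ then reduce to polynomial inequalities in $n_0$ with strictly positive exponents ($2t-1$ and $t(1-t)/2$, up to $\kappa$--corrections that are $O(n_0^\varepsilon)$), and hence both hold for all $n_0$ beyond some $N_0=N_0(t)$. An appeal to Theorem 1.1 then finishes the argument. The main obstacle I anticipate is the honest verification of $(\ref{20})$: because $\kappa$ is not assumed monotone, the sum over $n<n_1$ could in principle be distorted by an abnormally small value of $\kappa$ at some interior point, and it is precisely the comparability hypothesis on the minimum of $\kappa$ on $[1,2n]$ (and the auxiliary condition involving $m_\alpha$, whose role appears to be to provide analogous control over partial initial segments) that rules this out. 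A careful case split according to where the minimum of $\kappa$ on the relevant range is attained seems unavoidable, and only after that does the extraction of $\nu_i$, $\lambda_i$ of size $\asymp n_1$ go through cleanly.
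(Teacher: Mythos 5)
Your proposal follows essentially the same route as the paper: verify the ten hypotheses of Theorem 1.1 for $f(x)=x\kappa(x)$, estimating the two sums by integrals with the minimum/maximum comparability hypotheses on $\kappa$ (splitting the initial-segment sum at $n_1^{\alpha}$ via $m_{\alpha}$) to get $\nu_i,\lambda_i\asymp n_1$, $\xi_i(t)=1/(1-t-\delta t)$, $\eta_i(t)=1/(2t-1)$, after which (\ref{25}), (\ref{33}), (\ref{34}) become inequalities valid for all large $n_0$. One small point of care: since (\ref{15}) forces roughly $\varepsilon\lesssim M^{-4}$, the parameter $\varepsilon$ should be chosen after $M$ (which is harmless because the constants entering (\ref{M}) depend on $\varepsilon$ only through bounded factors), and in your sketch of (\ref{15}) you mean $f'(x)$ is dominated \emph{by} $(4752M^{4})^{-1}f(n_0)$, not the reverse.
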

\begin{proof}
In this proof we assume that $M$ satisfies (\ref{M}) and that $N_{0}\geq 9M^{2}$. The condition (\ref{15}) becomes
\[
\max _{x\in [n_{0}, 2n_{0}]}(\kappa (x)+x\kappa ^{'}(x)) \leq \frac{1}{4752}M^{-4}n_{0}\kappa (n_{0}).
\]
This inequality holds if both 
\begin{equation}
\label{A19}
\kappa (M(2n_{0}))\leq \frac{1}{9504}M^{-4}n_{0}\kappa (n_{0})
\end{equation}
and 
\begin{equation}
\label{A20}
\max _{x\in [n_{0}, 2n_{0}]}\kappa ^{'}(x) \leq \frac{1}{19008}M^{-4}\kappa (n_{0})
\end{equation}
are satisfied. These two inequalities are valid if the first two conditions in the statement of the theorem are satisfied.  The condition (\ref{15.5}) is satisfied if
\[
2n_{0}\kappa (M(2n_{0})) \leq (264M^{2})^{\frac{1}{t}}(n_{0}\kappa (n_{0})),
\]
which is equivalent to
\[
\kappa (M(2n_{0}))\leq \frac{1}{2}(264M^{2})^{\frac{1}{t}}\kappa (n_{0}).
\]
The right hand side is at least $(1/2)(264M^{2})^{\frac{1}{2}}\kappa (n_{0})$. Therefore, (\ref{15.5}) holds if
\begin{equation}
\label{A21}
\kappa (M(2n_{0}))\leq \sqrt{66}M \kappa (n_{0}).
\end{equation}
The condition (\ref{16}) is
\[
n_{0}\kappa (n_{0})\geq K(1+l)n_{0}\kappa ((1+l)n_{0}).
\]
Since the right hand side is at most $K(1+l)n_{0}\kappa (M(2n_{0}))$ for $l<1$, the condition (\ref{16}) is valid if
\[
n_{0}\kappa (n_{0})\geq K(1+l)n_{0}\kappa (M(2n_{0})).
\]
By taking $l=\varepsilon$, $K=(1-\varepsilon)/(1+\varepsilon)$, the above condition becomes
\begin{equation}
\label{A22}
\kappa (n_{0})\geq (1-\varepsilon)\kappa (M(2n_{0})).
\end{equation}
Notice that if we assume (\ref{A22}), then (\ref{A19}) and (\ref{A21}) automatically hold for any sufficiently large $n_{0}$. The condition (\ref{31}) is
\[
9M(n_{0}\kappa (n_{0}))^{t}\leq ln_{0}.
\]
This condition is valid if
\begin{equation}
\label{A23}
\kappa (n_{0})\ll n_{0}^{\varepsilon}
\end{equation}
holds for any $\varepsilon >0$, where the implied constant above is dependent only on $\varepsilon$. Next, we consider the condition (\ref{20}). Since
\[
\frac{1}{f(n)^{t+\delta t}}\geq \frac{1}{n^{t+\delta t}\kappa (M(n_{1}))^{t+\delta t}}
\]
for $1\leq n \leq n_{1}-1$, we have
\begin{align*}
\sum _{n=1}^{n_{1}-1}\frac{1}{f(n)^{t+\delta t}}\geq & \frac{1}{\kappa (M(n_{1}))^{t+\delta t}}\sum _{n=1}^{n_{1}-1}\frac{1}{n^{t+\delta t}} \geq  \frac{1}{\kappa (M(n_{1}))^{t+\delta t}} \int _{1}^{n_{1}}\frac{du}{u^{t+\delta t}} \\
= & \frac{n_{1}^{1-t-\delta t}-1}{(1-t-\delta t)\kappa (M(n_{1}))^{t+\delta t}} \geq  \frac{(1-\varepsilon )n_{1}}{2(1-t-\delta t)(\kappa (n_{1})n_{1})^{t+\delta t}}
\end{align*}
for any sufficiently large $n_{1}$. On the other hand, for any fixed $0<\alpha <1$ and for any sufficiently large $n_{1}$, we have
\begin{align*}
\sum _{n=1}^{n_{1}-1}\frac{1}{f(n)^{t+\delta t}}&\leq \sum _{1\leq n\leq n_{1}^{\alpha}}\frac{1}{\kappa (n)^{t+\delta t}n^{t+\delta t}} +\sum _{n_{1}^{\alpha} \leq n \leq n_{1}}\frac{1}{\kappa (n)^{t+\delta t}n^{t+\delta t}}  \\
&\leq \sum _{1\leq n\leq n_{1}^{\alpha}} \frac{1}{n^{t+\delta t}}   +\frac{1}{\kappa (m_{\alpha} (n_{1}))^{t+\delta t}} \sum _{n=1}^{n_{1}}\frac{1}{n^{t+\delta t}} \\
&\leq \frac{n_{1}^{\alpha (1-t-\delta t)}}{1-t-\delta t}+\frac{n_{1}^{1-t-\delta t}}{(1-t-\delta t)\kappa (m_{\alpha}(n_{1}))^{t+\delta t}} \\
&\leq \frac{2n_{1}^{1-t-\delta t}}{(1-t-\delta t)\kappa (m_{\alpha}(n_{1}))^{t+\delta t}} =\frac{2n_{1}\left( \frac{\kappa (n_{1})}{\kappa (m_{\alpha}(n_{1}))} \right)^{t+\delta t}}{(1-t-\delta t)(n_{1}\kappa (n_{1}))^{t+\delta t}},
\end{align*}
provided that
\[
\kappa (m_{\alpha}(n_{1}))^{t+\delta t}\leq n_{1}^{(1-\alpha)(1-t-\delta t)}.
\]
The above condition is satisfied if $\kappa$ satisfies $\kappa (n)\ll n^{\varepsilon}$ for any $\varepsilon >0$.  Combining these inequalities, for any sufficiently large $n_{1}$ and  any fixed $0<\alpha <1$, we have
\[
\frac{(1-\varepsilon )n_{1}}{2(1-t-\delta t)(\kappa (n_{1})n_{1})^{t+\delta t}} \leq \sum _{n=1}^{n_{1}-1}\frac{1}{f(n)^{t+\delta t}} \leq \frac{2n_{1}\left( \frac{\kappa (n_{1})}{\kappa (m_{\alpha}(n_{1}))} \right)^{t+\delta t}}{(1-t-\delta t)(n_{1}\kappa (n_{1}))^{t+\delta t}}.
\]
Therefore, the condition (\ref{20}) holds with 
\[
c_{1}=\frac{1-\varepsilon}{2}, \quad \xi _{1}(t)=\frac{1}{1-t-\delta t}, \quad \nu _{1}(n_{1})=n_{1},
\]
\[
c_{2}=2, \quad \xi _{2}(t)=\frac{1}{1-t-\delta t}, \quad \nu _{2}(n_{1})=\frac{n_{1}\kappa (n_{1})}{\kappa (m_{\alpha}(n_{1}))}.
\]
Next, we consider the condition (\ref{21}). For $t>1/2$, 
\begin{align*}
\sum _{n=n_{1}}^{\infty}\frac{1}{f(n)^{2t}}&\geq \frac{1}{\kappa (M(2n_{1}))^{2t}}\sum _{n=n_{1}}^{2n_{1}}\frac{1}{n^{2t}}  \geq \frac{1}{\kappa (M(2n_{1}))^{2t}} \int _{n_{1}}^{2n_{1}+1}\frac{du}{u^{2t}} \\
&\quad =\frac{n_{1}^{1-2t}-(2n_{1}+1)^{1-2t}}{(2t-1)\kappa (M(2n_{1}))^{2t}}  \geq \frac{n_{1}}{2(2t-1)\kappa (M(2n_{1}))^{2t}n_{1}^{2t}} \\
&\quad \quad \geq \frac{n_{1}}{2(1-\varepsilon)(2t-1)(\kappa (n_{1})n_{1})^{2t}},
\end{align*}
provided that (\ref{A22}) holds. On the other hand, 
\begin{align*}
\sum _{n=n_{1}}^{\infty}\frac{1}{f(n)^{2t}}\leq \frac{1}{\mathfrak{m}(n_{1})^{2t}}\sum _{n=n_{1}}^{\infty}\frac{1}{n^{2t}} =\frac{n_{1}\left( \frac{\kappa (n_{1})}{\kappa (\mathfrak{m}(n_{1}))} \right)^{2t}}{(2t-1)(n_{1}\kappa (n_{1}))^{2t}}.
\end{align*}
Combining these inequalities, it follows that (\ref{21}) holds with
\[
d_{1}=\frac{1}{2(1-\varepsilon )}, \quad \eta _{1}(t)=\frac{1}{2t-1}, \quad \lambda _{1}(n_{1})=n_{1},
\]
\[
d_{2}=1, \quad \eta _{2}(t)=\frac{1}{2t-1}, \quad \lambda _{2}(n_{1})=n_{1}\left( \frac{\kappa (n_{1})}{\kappa (\mathfrak{m}(n_{1}))} \right)^{2}.
\]
The condition (\ref{25}) is satisfied if $\kappa$ satisfies
\begin{equation}
\label{A25}
\kappa (n_{0})\leq C_{\alpha}\kappa (m_{\alpha}(n_{0}))
\end{equation}
for some $0<\alpha<1$, $C_{\alpha}>0$. The condition (\ref{33}) becomes 
\[
n_{0}^{\frac{1-\delta}{2}}\left( \frac{\kappa (n_{0})}{\kappa (\mathfrak{m} (n_{0})) }\right)^{-(1+\delta)}\geq \frac{2(1-t-\delta t)}{1-\varepsilon}\left( \frac{1}{2t-1} \right)^{\frac{1+\delta}{2}} M^{1-\frac{\delta}{2}}.
\]
If $\kappa$ satisfies
\begin{equation}
\label{A26}
\kappa (\mathfrak{m}(n_{0}))\leq C_{\mathfrak{m}}\kappa (n_{0}),
\end{equation}
then the above inequality holds for any sufficiently large $n_{0}$. The condition (\ref{34}) becomes 
\[
n_{0}\left( \frac{\kappa (n_{0})}{\kappa (\mathfrak{m}(n_{0}))} \right)^{2}\leq (2t-1)(n_{0}\kappa (n_{0}))^{2t}.
\]
If $\kappa$ satisfies
\begin{equation}
\label{A27}
\kappa (\mathfrak{m}(n_{0}))\geq c_{\mathfrak{m}}\kappa (n_{0}),
\end{equation}
then the above inequality holds for any sufficiently large $n_{0}$. Combining these results we obtain the conclusion of the theorem.
\end{proof}

\subsection{Packing of squares of nearly TP-harmonic sidelength}
A prime number $p$ is called a {\it twin prime} if at least one of $p-2$ or $p+2$ is also a prime number.  We denote the $n$th twin prime by $\mathfrak{p}_{n}$. Then $\mathfrak{p}_{1}=3,\; \mathfrak{p}_{2}=5,\; \mathfrak{p}_{3}=7,\; \mathfrak{p}_{4}=11,\; \mathfrak{p}_{5}=13, \ldots$. For $x\geq 2$, we denote the number of twin primes below $x$ by $\pi _{2}(x)$. Then, it is known that for any sufficiently large $x$,
\begin{equation}
\label{TP}
\pi _{2}(x) \leq \frac{C \Pi _{2}x}{(\log x)^{2}}\left(1+c\frac{\log \log x}{\log x} \right),
\end{equation}
where 
\begin{equation}
\label{tpc}
\Pi _{2}=\underset{p\geq 3}{\prod _{p:\mathrm{prime}}}\left(1-\frac{1}{(p-1)^{2}} \right)=0.6601618\ldots
\end{equation}
is the {\it twin prime constant}, and  $c$ is some absolute constant. The coefficient $C$ is also some absolute constant. Though the value of $C$ has not been known correctly, according to the Hardy-Littlewood conjecture \cite{HL}, the correct value of $C$ is expected to be 2. Currently the best known upper bound is $C \leq 6.8325$ by Haugland \cite{H}. 
\begin{cor}
Let ${\mathfrak p}_{n}$ be the $n$th twin prime, $\Pi _{2}$ be the twin prime constant defined by (\ref{tpc}) and $C$ be a constant in (\ref{TP}). Then, for any $1/2<t<1$ and $C ^{'}>C$, there exists a positive integer $N_{0}$ depending on $t$ and $C ^{'}$ that for any $n_{0} \geq N_{0}$, squares of sidelength ${\mathfrak p}_{n}^{-t}$ for $n \geq n_{0}$ can be packed into a square of area $(C ^{'}\Pi _{2})^{2t}\sum _{n=n_{0}}^{\infty}(n(\log n)^{2})^{-2t}$.
\end{cor}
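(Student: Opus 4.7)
The plan is to invoke Theorem~5.1 with $\kappa(x)=(\log x)^{2}$ (smoothly regularised near $x=1$ so that $\kappa$ takes values in $\mathbb{R}_{>1}$), scale the resulting perfectly packing by the factor $(C'\Pi_{2})^{t}$, and then use the twin-prime bound (\ref{TP}) to replace each rescaled slot by a twin-prime square of sidelength $\mathfrak{p}_{n}^{-t}$ placed in one of its corners. Theorem~5.1 would then deliver, for each $t\in(1/2,1)$, a threshold $N_{0}^{\ast}(t)$ and a perfectly packing of ``model'' squares of sidelength $(n(\log n)^{2})^{-t}$ for $n\ge n_{0}\ge N_{0}^{\ast}(t)$ into a reference square of area $\sum_{n\ge n_{0}}(n(\log n)^{2})^{-2t}$. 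Rescaling the whole configuration by the factor $(C'\Pi_{2})^{t}$ produces a perfectly packing of squares of sidelength $(C'\Pi_{2})^{t}(n(\log n)^{2})^{-t}$ into a square of area $(C'\Pi_{2})^{2t}\sum_{n\ge n_{0}}(n(\log n)^{2})^{-2t}$, which is exactly the target square of the corollary.

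Verifying the hypotheses of Theorem~5.1 for this $\kappa$ is routine. One has $\kappa'(x)/\kappa(x)=O(1/(x\log x))\to 0$, giving the derivative bound on $[n,2n]$; the growth bound $\kappa(n)\le n^{\varepsilon}$ is immediate; and since $\kappa$ is eventually strictly increasing, the $[n,\infty)$-minimisers $\mathfrak{m}(n)$ and $m_{\alpha}(n)$ both equal $n$, so the two ratio conditions hold with constants equal to $1$. The slow-variation comparison of $\kappa(n)$ with $\kappa$ at a $[1,2n]$-extremum follows from $(\log 2n/\log n)^{2}\to 1$, and the smooth regularisation near $x=1$ contributes only a bounded constant, dominated by $\kappa(n)=(\log n)^{2}\to\infty$, so every such condition holds for $n\ge N_{\varepsilon}$.

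For the replacement step I would use (\ref{TP}) to dominate $\mathfrak{p}_{n}^{-t}$ by the model sidelengths. Since $\pi_{2}(\mathfrak{p}_{n})\ge n$, substituting $x=\mathfrak{p}_{n}$ into (\ref{TP}) and using the trivial bound $\log\mathfrak{p}_{n}\ge\log n$ yields, after rearrangement,
\[
\mathfrak{p}_{n}\ge\frac{n(\log n)^{2}}{C\Pi_{2}\bigl(1+O((\log\log n)/\log n)\bigr)}\ge\frac{n(\log n)^{2}}{C'\Pi_{2}}
\]
for any fixed $C'>C$ and all $n$ exceeding a threshold depending on $C'-C$. Equivalently, $\mathfrak{p}_{n}^{-t}\le(C'\Pi_{2})^{t}(n(\log n)^{2})^{-t}$, so each rescaled slot strictly contains the corresponding twin-prime square, and disjointness of interiors is preserved. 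Taking $N_{0}$ to exceed both $N_{0}^{\ast}(t)$ and the twin-prime threshold yields the corollary.

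No single step is genuinely hard; the main obstacle is the bookkeeping required to match the rescaling factor $(C'\Pi_{2})^{t}$ with the twin-prime lower bound, where the strict inequality $C'>C$ is precisely what provides the room to absorb the $1+o(1)$ slippage from the secondary term in (\ref{TP}). The codomain requirement $\kappa:\mathbb{R}_{>1}\to\mathbb{R}_{>1}$ of Theorem~5.1 is addressed purely cosmetically by the regularisation of $\kappa$ near $x=1$, which affects only finitely many indices and is absorbed into the threshold $N_{0}^{\ast}(t)$.
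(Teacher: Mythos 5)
Your proposal is correct and follows essentially the same route as the paper: apply Theorem~5.1 to a $\kappa$ of the form $(\log x)^{2}$ (regularised near $x=1$), use the bound (\ref{TP}) with $\pi_{2}(\mathfrak{p}_{n})=n$ to get $\mathfrak{p}_{n}\geq n(\log n)^{2}/(C'\Pi_{2})$ for large $n$, and place each twin-prime square inside the corresponding model slot. The only cosmetic difference is that the paper absorbs the constant $1/(C'\Pi_{2})$ directly into $\kappa$ instead of rescaling the packing afterwards, which amounts to the same thing.
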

\begin{proof}
Since $\pi _{2}(\mathfrak{p}_{n})=n$, the estimate (\ref{TP}) yields 
\[
n\leq \frac{C \Pi _{2}\mathfrak{p}_{n}}{(\log \mathfrak{p}_{n})^{2}}\left(1+ c \frac{\log \log \mathfrak{p}_{n}}{\log \mathfrak{p}_{n}} \right).
\]
From this, we easily see that for any $C ^{'}>C$, 
\begin{equation}
\label{tpestimate}
\mathfrak{p}_{n}\geq \frac{n}{C ^{'}\Pi _{2}}(\log n)^{2}
\end{equation}
holds for any $n\geq N(C^{'})$, where $N(C^{'})$ is some positive integer depending on $C ^{'}$. We put
\[
f(x)=\frac{x}{C ^{'}\Pi _{2}}(\log x)^{2}, \quad \kappa (x)=\frac{1}{C ^{'}\Pi _{2}}(\log x)^{2}
\]
for $x>1$. By appropriately modifying  the value of $\kappa (x)$ when $x$ is small, $\kappa$ becomes a smooth function from $\mathbb{R}_{>1}$ to $\mathbb{R}_{>1}$ and satisfies all the conditions of Theorem 5.1. Therefore, for any $1/2<t<1$, there exists a positive integer $N_{0}(t)$ such that for any positive integer $n_{0}\geq N_{0}(t)$, squares of sidelength $((n/C ^{'}\Pi _{2})(\log n)^{2})^{-t}$ for $n \geq n_{0}$ can be packed perfectly into a square of area $\sum _{n=n_{0}}^{\infty}((n/C ^{'}\Pi _{2})(\log n)^{2})^{-2t}$. By (\ref{tpestimate}),  a square of sidelength $\mathfrak{p}_{n}^{-t}$ for $n\geq N(C^{'})$ is contained in a square of sidelength $((n/C ^{'}\Pi _{2})(\log n)^{2})^{-t}$. Hence by taking $N_{0}:=\max \{N_{0}(t), N(C^{'}) \}$,  we obtain the result.
\end{proof}


\section{Acknowledgements}
This work is partially supported by the JSPS, KAKENHI Grant Number 21K03204. The author thanks Professor Yuta Suzuki for telling him how to construct a smooth increasing function $f$ in  Lemma 4.5.  The author  sincerely thanks to the referee of this paper for reading the first manuscript very carefully and giving a lot of valuable comments and suggestions.


\noindent
Kanto Gakuin University, \\
Kanazawa, Yokohama\\
Kanagawa, Japan\\
E-mail address: sono@kanto-gakuin.ac.jp

\end{document}